\documentclass[reqno, a4paper, 11pt]{amsart}
\usepackage[utf8x]{inputenc}
\usepackage[dvips]{epsfig}
\usepackage{amsgen, amstext,amsbsy,amsopn,amssymb, amsthm, 
mathptmx, amsfonts,amssymb,amscd,amsmath,nicefrac,euscript,enumerate,url,verbatim,calc,framed}
\usepackage[all]{xy}

\setlength{\marginparwidth}{60 pt}
%
%

%
%

%
%

%
%

%
%

\DeclareMathOperator{\pnt}{\raise 0.5mm \hbox{\large\bf.}}

%
%

\newtheorem{thm}{\bf Theorem}[section]
\newtheorem{lemma}[thm]{\bf Lemma}
\newtheorem{cor}[thm]{\bf Corollary}
\newtheorem{pro}[thm]{\bf Proposition}
\newtheorem{conj}[thm]{\bf Conjecture}

\newtheorem{note}[thm]{\bf Note}

\theoremstyle{definition}

\newtheorem{rem}[thm]{\bf Remark}
\newtheorem{ex}[thm]{\bf Example}


\title[Regular sequences of symmetric polynomials]{Regular sequences of power sums and complete symmetric polynomials}

\author{Neeraj~Kumar}
\address{Dipartimento di Matematica, Universit\'{a} di Genova\\
Via Dodecaneso 35, 16146 Genova, Italy}
\email{kumar@dima.unige.it}

\author{Ivan~Martino} 
\address{Matematiska Institutionen, Stockholms Universitet, Stockholm, Sweden}
\email{martino@math.su.se}

\thanks{{\it Date: October $07$, $2011$.}}    
\thanks{{\it Key words:} Regular sequences, Symmetric polynomials.
\endgraf
{\it 2010 Mathematics Subject Classification:} Primary $05E05$; Secondary $13P10,\; 11C08$.}

\begin{document}

\begin{abstract}
In this article, we carry out the investigation for regular sequences 
of symmetric polynomials in the polynomial ring in three and four variable. 
Any two power sum element in $\mathbb{C}[x_1,x_2,\dots,x_n]$ for $n \geq 3$ 
always form a regular sequence and we state the conjecture when $p_a,p_b,p_c$ 
for given positive integers $a<b<c$ forms a regular sequence in 
$\mathbb{C}[x_1,x_2,x_3,x_4]$. We also provide evidence for this conjecture 
by proving it in special instances. We also prove that any sequence of power sums 
of the form $p_{a}, p_{a+1},\dots, p_{a+ m-1},p_b$ with $m <n-1$ forms a
regular sequence in $\mathbb{C}[x_1,x_2,\dots,x_n]$.  We also provide a partial
evidence in support of conjecture's given by Conca, Krattenthaler and Watanabe 
in \cite{C-K-W paper} on regular sequences of symmetric polynomials.
\end{abstract}

\maketitle

\section{Introduction}
The work in this article is inspired by the work of Conca, Krattenthaler and 
Watanabe on regular sequences of symmetric polynomials \cite{C-K-W paper}.

We introduce some basic definitions, notation and well known results
which we will use in the sequel. Let $S=\mathbb{C}[x_1,\dots,x_n]$ be a polynomial ring. 
We denote by $ p_m(x_1,x_2,\dots,x_n), h_m(x_1,x_2,\dots,x_n)$ and $e_m(x_1,x_2,\dots,x_n)$, the power sum 
symmetric polynomials, complete homogeneous symmetric polynomials and the 
elementary symmetric polynomials of degree $m$ in $S$ respectively, that is:
\[
 \begin{split}
 p_m(x_1,x_2,\dots,x_n):=& \sum_{i=1}^{n}x_i^m, \\
 h_m(x_1,x_2,\dots,x_n):=& \sum_{1 \leq i_{1} \leq i_{2} \leq \cdots \leq i_m \leq n} x_{i_1}x_{i_2}\cdots x_{i_m},\\
 e_m(x_1,x_2,\dots,x_n):=& \sum_{1 \leq i_{1} < i_{2} < \cdots < i_m \leq n} x_{i_1}x_{i_2}\cdots x_{i_m}.
 \end{split}
\]
We will also denote by $p_m(n),\;h_m(n), \text{ and } e_m(n),$ the power sum symmetric polynomials,
 complete homogeneous symmetric polynomials, and the elementary symmetric polynomials 
respectively. When $n$ is clear from the context, we may simply denote them by $p_m,h_m 
\text{ and } e_m$ respectively. 
 
 A sequence of elements $f_1,f_2,\dots, f_k $ in $S$ is a \emph{regular sequence} on $S$ 
if the ideal $(f_1,f_2,\dots,f_k)$ is proper and for each $i$, the image of $f_{i+1}$ is a 
nonzero divisor in $S/(f_1,\dots,f_i)$.
 


We have used \emph{Newton's formulas} for $p_n,h_n \text{ and } e_n$, 
( see equation $2.6^{\prime},2.11^{\prime}$) from Macdonald \cite{I-G-Macdonald}.
These relations together with the Theorem \ref{thm-being-regular} are very helpful in 
investigating regular sequences. 

We have used the \emph{Serre's criterion for normality} (see section $18.3$, 
Theorem $18.15$ \cite{Eisenbud}) for proving primeness
for power sum polynomials in the polynomial ring. Once we know that $p_a,p_b$ 
generates a prime ideal in $\mathbb{C}[x_1,x_2,x_3,x_4]$, we can add
one more polynomial $f$ and conclude that $p_a,p_b,f$ forms a regular sequence for
all $f \notin (p_a,p_b)$. We prove $p_{1},p_{2m}$ generates a prime ideal, where
$m \in \mathbb{N}$, see Proposition \ref{1-2m}. We also prove this in the case of 
consecutive integres $a,a+1$. In fact we prove a more general statement that 
any consecutive power sum $p_{a}, p_{a+1},\dots, p_{a+ m-1}$ 
with $m <n-1$ generates a prime ideal in $\mathbb{C}[x_1,x_2,\dots,x_n]$, see Theorem \ref{n-2 consecutive}.

In general, it turns out to be difficult to find conditions on $\{a,b\}$ such that $(p_a,p_b)$ is a prime ideal. 
We did several computations in CoCoA and found some conditions on $\{a,b\}$ such that 
$(p_a,p_b)$ is a prime ideal, see Conjecture \ref{prime computation}. 
For example when $a$ is prime number, $a \geq5$ and $b=a+m+6d$ with $m \in \{1,5\}$ 
then $(p_a,p_b)$ is a prime ideal.
 
However, a very nice introduction to regular sequences of symmetric polynomials
is given by Conca, Krattenthaler and Watanabe in \cite{C-K-W paper}. 
So, we refer the reader for detailed introduction to \cite{C-K-W paper}. 

\medskip
{ \bf Convention:} By an expression of the form $f_{i_1},f_{i_2},\dots, f_{i_k}$ or
$(f_{i_1},f_{i_2},\dots f_{i_k})$ for the power sum and complete symmetric polynomials, 
we always assume that $i_1<i_2<\cdots <i_k$.
\medskip

\section{Some results on regular sequences}
\medskip

Let us recall some well known results about regular sequences.
\begin{lemma} Let $S=\mathbb{C}[x_1,x_2,\dots,x_n]$ be a polynomial ring. The sequence of 
homogeneous polynomials $f_1, f_2,\dots, f_k$ is a regular sequence in $S$ if and only if 
\[
    H_{S/I}(z)=\frac{\prod_{i=1}^{k}(1-z^{d_i})}{(1-z)^n}.
\]
where $d_i=\deg{f_i}$ and $I=(f_1, f_2,\dots, f_k).$
\end{lemma}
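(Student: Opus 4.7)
The plan is to argue by induction on $k$, using the short (or four-term) exact sequence obtained by multiplication by $f_k$. Set $I_j := (f_1, \dots, f_j)$ with $I_0 = 0$, so that $H_{S/I_0}(z) = 1/(1-z)^n$ and the statement is a tautology at $k=0$. The engine of both directions is the four-term exact sequence of finitely generated graded $S$-modules
\[
0 \to \bigl(\mathrm{Ann}_{S/I_{k-1}}(f_k)\bigr)(-d_k) \to (S/I_{k-1})(-d_k) \xrightarrow{\cdot f_k} S/I_{k-1} \to S/I_k \to 0,
\]
together with the additivity of the Hilbert series on short exact sequences of graded modules. Splitting this into two short exact sequences at the image of multiplication by $f_k$ yields
\[
H_{S/I_k}(z) \;=\; (1-z^{d_k})\, H_{S/I_{k-1}}(z) \;+\; z^{d_k} H_{\mathrm{Ann}_{S/I_{k-1}}(f_k)}(z).
\]
Since the coefficients of $H_{\mathrm{Ann}_{S/I_{k-1}}(f_k)}(z)$ are nonnegative, this gives the coefficient-wise inequality $H_{S/I_k}(z) \geq (1-z^{d_k})\, H_{S/I_{k-1}}(z)$, with equality if and only if $f_k$ is a non-zero-divisor on $S/I_{k-1}$.

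For the forward direction, assume $f_1, \dots, f_k$ is a regular sequence. Then $f_1, \dots, f_{k-1}$ is also regular, so by induction $H_{S/I_{k-1}}(z) = \prod_{i=1}^{k-1}(1-z^{d_i})/(1-z)^n$. Regularity of the full sequence means $f_k$ is a non-zero-divisor on $S/I_{k-1}$, so the displayed equation reduces to $H_{S/I_k}(z) = (1-z^{d_k})\, H_{S/I_{k-1}}(z)$, which gives the desired formula.

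For the backward direction, iterate the inequality to obtain
\[
H_{S/I_k}(z) \;\geq\; \prod_{i=1}^{k}(1 - z^{d_i})\,\cdot\, H_S(z) \;=\; \frac{\prod_{i=1}^{k}(1-z^{d_i})}{(1-z)^n},
\]
with equality if and only if \emph{every} intermediate inequality is an equality, i.e.\ if and only if $f_j$ is a non-zero-divisor on $S/I_{j-1}$ for each $j=1, \dots, k$. Hence, if the Hilbert series has the prescribed form, then the sequence must be regular. The main obstacle is really bookkeeping: one must verify that the kernel of multiplication by $f_k$ gives a nonnegative Hilbert series contribution and that the induction cleanly propagates equality backwards through all $k$ steps; once this is set up, both implications fall out of the same identity.
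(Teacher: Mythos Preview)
The paper states this lemma as a well-known result and gives no proof, so there is nothing to compare your argument against directly. Your forward direction is correct and is the standard argument.

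The backward direction, however, has a genuine gap. You correctly obtain the single-step coefficient-wise inequality
\[
H_{S/I_k}(z)\;\geq\;(1-z^{d_k})\,H_{S/I_{k-1}}(z),
\]
but you cannot simply ``iterate'' it: multiplying a coefficient-wise inequality $A(z)\geq B(z)$ by the factor $(1-z^{d})$, which has a negative coefficient, does \emph{not} preserve the inequality. (For example $1\geq 0$ coefficient-wise, yet $(1-z)\cdot 1 = 1-z$ is not $\geq 0$ coefficient-wise.) Thus the chain
\[
H_{S/I_k}(z)\;\geq\;(1-z^{d_k})H_{S/I_{k-1}}(z)\;\geq\;(1-z^{d_k})(1-z^{d_{k-1}})H_{S/I_{k-2}}(z)\;\geq\cdots
\]
is unjustified, and your conclusion that equality at the end forces each annihilator to vanish does not follow from this setup. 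Telescoping your identity gives
\[
H_{S/I_k}(z)-\prod_{i=1}^{k}(1-z^{d_i})\,H_S(z)
=\sum_{j=1}^{k}\Bigl(\prod_{i>j}(1-z^{d_i})\Bigr)\,z^{d_j}\,H_{A_j}(z),
\]
and the summands on the right need not be individually nonnegative, so their sum vanishing does not obviously force each $A_j=0$.

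A clean repair of the backward direction avoids inequalities altogether and reads off the Krull dimension. The assumed Hilbert series equals $\prod_i(1+z+\cdots+z^{d_i-1})\big/(1-z)^{n-k}$, whose pole at $z=1$ has order exactly $n-k$ since the numerator evaluates to $\prod_i d_i\neq 0$ there; hence $\dim S/I_k=n-k$. Now $S$ is Cohen--Macaulay, so one argues inductively: in a graded CM ring all associated primes are minimal and have maximal dimension, so if $\dim S/I_j=\dim S/I_{j-1}-1$ then $f_j$ lies in no associated prime of $S/I_{j-1}$ and is therefore a non-zero-divisor, and $S/I_j$ remains CM. Since the dimension drops by one at every step from $n$ down to $n-k$, each $f_j$ is regular on $S/I_{j-1}$.
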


We will use the following characterization very often for proving regular sequence for the 
power sums and complete symmetric polynomials:
\begin{thm}\label{thm-being-regular}
Let $f_i, f_j, f_k \in S=\mathbb{C}[x_1,x_2,x_3]$. The sequence $f_i, f_j, f_k$ is a 
regular sequence if and only if  $f_k\notin (f_i, f_j)$  and 
for any $f$ of degree bigger than $i+j+k$ we have $f\in (f_i, f_j, f_k)$.
\end{thm}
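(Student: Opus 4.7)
The plan is to route the argument through the Hilbert-series characterization in the preceding lemma.

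For the forward direction (regular sequence implies both conditions), I apply that lemma to write
$$H_{S/I}(z) = \frac{(1-z^i)(1-z^j)(1-z^k)}{(1-z)^3} = (1+z+\cdots+z^{i-1})(1+z+\cdots+z^{j-1})(1+z+\cdots+z^{k-1}),$$
a polynomial of degree $i+j+k-3$. Consequently $(S/I)_d = 0$ for every $d > i+j+k-3$, and a fortiori for $d > i+j+k$, giving the second condition. The first condition, $f_k \notin (f_i,f_j)$, is immediate from the definition of a regular sequence: $f_k$ must be a non-zero-divisor on $S/(f_i,f_j)$, hence nonzero there.

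For the converse, the heart of the argument is to promote condition (b) to Artinian-ness of $S/I$. Explicitly: by (b) the graded pieces $(S/I)_d$ vanish for $d > i+j+k$, while the finitely many remaining pieces are each finite-dimensional, so $\dim_{\mathbb{C}} S/I < \infty$. Since the $f_m$ are homogeneous of positive degrees, $I \subseteq \mm := (x_1,x_2,x_3)$, hence $I$ is proper; combined with finite length of $S/I$ this forces $I$ to be $\mm$-primary, so $\height I = 3 = \dim S$. Because $S$ is Cohen-Macaulay of dimension $3$, any ideal of height $3$ generated by three elements has those three generators forming a regular sequence (in a CM ring, height equals grade, and a generating sequence of length equal to the grade is regular), which applied to $I = (f_i,f_j,f_k)$ gives the conclusion.

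The step I expect to be the main technical point is the Cohen-Macaulay input in the converse: one must recognize that, in $S$, three homogeneous generators of a height-three ideal automatically form a regular sequence. Condition (a) plays no active role in the converse — it is a consequence of the remaining hypotheses in this positive-degree, homogeneous setting — but it is retained in the statement for symmetry with the forward direction and because verifying $f_k\notin(f_i,f_j)$ is a natural first computational check before engaging with the infinite family of degree conditions hidden in (b).
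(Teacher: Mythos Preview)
Your argument is correct and, in fact, more complete than the paper's. For the forward direction you invoke the Hilbert-series lemma directly, which cleanly gives $(S/I)_d=0$ for $d>i+j+k-3$; the paper instead argues geometrically that a homogeneous regular sequence in three variables has $(0,0,0)$ as its only common zero, and then asserts (somewhat loosely, with an incorrect multiplicity $i+j+k$ in place of $ijk$ and an overstatement that $I$ equals a power of $\mm$) that this forces high-degree forms into $I$. Your route via the Hilbert series is both cleaner and sharper.

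More significantly, the paper's proof addresses only the forward implication; the converse is simply not argued there. Your converse---finite length of $S/I$ from condition~(b), hence $\height I=3$, hence the three generators form a regular sequence by Cohen--Macaulayness of $S$---is the standard and correct way to close this gap. Your observation that condition~(a) is redundant for the converse (since $f_k\in(f_i,f_j)$ would force $\height I\le 2$ by Krull's height theorem, contradicting~(b)) is also accurate and worth noting.
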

\begin{proof}
If $f_i, f_j, f_k$ is a regular sequence then $f_i$ is  not a zero divisor on $S$, $f_j$ is  
not a zero divisor on $S/(f_i)$ and $f_k$ is not a zero divisor on $S/(f_i,f_j)$. 
This implies $f_k\notin (f_i, f_j)$. 

We know that $(0,0,0)$ is the only solution of the system $(f_i, f_j, f_k)$. 
This means $(0,0,0)$ has multiplicity $i+j+k$ and $(f_i, f_j, f_k)$ 
is the $(i+j+k)$-th power of the maximal ideal. So considering $f$ with degree of $f$ bigger than
$i+j+k$, this implies $f\in (f_i, f_j, f_k)$.
\end{proof}
Of course, there are three possible cases for $f_i, f_j, f_k$ in $\mathbb{C}[x_1,x_2,x_3]$:
\begin{enumerate}
  \item $f_k\notin (f_i, f_j)$, and $f_i, f_j, f_k$ is a regular sequence;
  \item $f_k\notin (f_i, f_j)$, and $f_i, f_j, f_k$ is not a regular sequence,
  \item $f_k\in (f_i, f_j)$, then $f_i, f_j, f_k$ is not a regular sequence.
\end{enumerate}
See an Example \ref{ex-1-4-5}, where $h_k\notin (h_i, h_j)$ and $h_i, h_j, h_k$ is not a regular sequence.
\paragraph{Notation:}
For a subset $A\subset \mathbb{N}^*$, we set
\[
p_A(n)=\{p_a(n):a \in A \} \text{ and } h_A(n)=\{h_a(n):a \in A \}.
\]
\begin{pro} \label{consecutive}
 Let $A\subset \mathbb{N}^*$ be a set of $n$ consecutive elements. Then both $p_A(n)$
and $h_A(n)$ are regular sequences in $k[x_1,\dots,x_n]$.
\end{pro}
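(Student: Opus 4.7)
The plan, uniform in the two cases, is to show that the common vanishing locus in $\C^n$ of the $n$ polynomials in question reduces to the origin. Since each sequence consists of $n$ homogeneous elements of positive degree in the $n$-dimensional Cohen--Macaulay ring $S=\C[x_1,\dots,x_n]$, once the zero set is $\{0\}$ the generated ideal is $\mm$-primary of height $n$, and hence the $n$ elements automatically form a regular sequence. Write $A=\{a,a+1,\dots,a+n-1\}$ with $a\ge 1$ and let $\xi=(\xi_1,\dots,\xi_n)\in\C^n$ be a putative common zero.

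For the power sums $p_A(n)$, I would group the distinct nonzero values appearing among the coordinates of $\xi$ into $\alpha_1,\dots,\alpha_s$ with multiplicities $m_1,\dots,m_s\ge 1$, so that $p_k(\xi)=\sum_{j=1}^{s} m_j \alpha_j^k$ for every $k\ge 1$. Since $s\le n$, the first $s$ of the hypothesized equations $p_{a+i}(\xi)=0$ for $i=0,\dots,s-1$ produce an $s\times s$ homogeneous linear system in $(m_1,\dots,m_s)$ whose coefficient matrix factors as the Vandermonde matrix in $\alpha_1,\dots,\alpha_s$ multiplied on the right by $\mathrm{diag}(\alpha_1^a,\dots,\alpha_s^a)$. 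Both factors are invertible because the $\alpha_j$ are distinct and nonzero, so $m_j=0$ for every $j$; this contradicts $m_j\ge 1$ unless $s=0$, giving $\xi=0$.

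For the complete symmetric polynomials $h_A(n)$, the route is via the generating-function identity
\[
H(t)\;:=\;\sum_{k\ge 0}h_k(\xi)\,t^k \;=\; \prod_{i=1}^{n}\frac{1}{1-\xi_i t} \;=\; \frac{1}{Q(t)},
\]
where $Q(t)=\prod_{\xi_i\ne 0}(1-\xi_i t)=1+c_1 t+\cdots+c_r t^r$, $r$ is the number of nonzero coordinates of $\xi$, and $c_r=\prod_{\xi_i\ne 0}(-\xi_i)\ne 0$. Expanding $Q(t)H(t)=1$ yields the linear recurrence $\sum_{j=0}^{r} c_j\,h_{k-j}(\xi)=0$ for every $k\ge r$. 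Because $r\le n$, the hypothesis in particular forces the $r$ consecutive vanishings $h_a(\xi)=\cdots=h_{a+r-1}(\xi)=0$; applying the recurrence at $k=a+r-1,\,a+r-2,\,\dots,\,r$ in turn and solving for the lowest-index term (using $c_r\ne 0$) cascades backward to $h_{a-1}(\xi)=h_{a-2}(\xi)=\cdots=h_0(\xi)=0$, contradicting $h_0=1$ unless $r=0$, in which case $\xi=0$.

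The main subtlety in both arguments is the bookkeeping when $\xi$ has zero coordinates: the effective size of the Vandermonde system drops from $n$ to $s$, and the order of the recurrence for the $h_k(\xi)$ drops from $n$ to $r$, but the $n$ hypothesized vanishings always exceed the $s$ (respectively $r$) equations actually needed, so the same argument handles the degenerate cases uniformly. Beyond this, only elementary linear algebra and one appeal to the standard Cohen--Macaulay/height-$n$ equivalence are required.
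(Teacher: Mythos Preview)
Your argument is correct. Note, however, that the paper does not actually prove this proposition: its ``proof'' is a bare citation to Proposition~2.9 of \cite{C-K-W paper}. So there is nothing to compare at the level of method within this paper itself.

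That said, your Vandermonde argument for the power-sum case is exactly the mechanism the authors deploy later, in the proof of Theorem~\ref{n-2 consecutive}, to show that the singular locus of $(p_a,\dots,p_{a+m-1})$ is the origin; you are in effect anticipating and specializing that computation to $m=n$. For the complete symmetric polynomials your generating-function recurrence $\sum_{j=0}^{r}c_j h_{k-j}(\xi)=0$ together with the backward cascade to $h_0=1$ is clean and entirely self-contained; the only point worth making explicit in a polished write-up is that the recurrence is valid for all $k\ge 1$ (with the convention $c_j=0$ for $j>r$), so that the step at $k=r$ is legitimate even when $a$ is small. Your handling of zero coordinates---reducing the Vandermonde size to $s$ and the recurrence order to $r$---is the right way to make the argument uniform, and it is exactly the kind of bookkeeping the cited reference also has to do.
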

\begin{proof}
Refer to Proposition $2.9$ \cite{C-K-W paper} for proof.
\end{proof}

We are going to use the Newton's formulas:
\begin{pro}\label{Newton-formulas}
Let $p_n$ be the power sum symmetric polynomial of 
degree $n$,  $h_n$ be the complete homogeneous symmetric polynomial of degree $n$
and let $e_n$ be the elementary symmetric polynomial of degree $n$. Then
\[
\begin{split}
  ne_n=&\sum_{i=1}^{n} (-1)^{i-1}e_{n-i} p_{i} \text{ for all $n \geq 1.$ }\\
  \text{and   } &\sum_{i=0}^{n} (-1)^{i}e_i h_{n-i} =0 \text{ for all $n \geq 1.$ }
\end{split}
\]
These equations are due to Isaac Newton, see Macdonald \cite{I-G-Macdonald} ( equation $2.6^{\prime},2.11^{\prime}$).
\end{pro}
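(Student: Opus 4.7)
The plan is to establish both identities by manipulating generating functions, which is the standard and most transparent route. Introduce the formal power series
$$E(t)=\prod_{i=1}^{n}(1+x_i t)=\sum_{k\geq 0}e_k t^k, \qquad H(t)=\prod_{i=1}^{n}\frac{1}{1-x_i t}=\sum_{k\geq 0}h_k t^k$$
in the ring $\mathbb{C}[x_1,\dots,x_n][[t]]$. Both product formulas follow at once from the definitions of $e_m$ and $h_m$ by expanding each factor and collecting terms of degree $k$ in $t$.

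For the second identity I would start from the relation $E(-t)\,H(t)=\prod_i\frac{1-x_i t}{1-x_i t}=1$, which is immediate from the product formulas. Writing out the product of the two power series and extracting the coefficient of $t^n$ for $n\geq 1$ gives $\sum_{i=0}^{n}(-1)^i e_i h_{n-i}=0$, as required.

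For Newton's formula I would take the logarithmic derivative of $E(t)$. Expanding $\log(1+x_i t)=\sum_{k\geq 1}(-1)^{k-1}(x_i t)^k/k$ and summing over $i$ yields $\log E(t)=\sum_{k\geq 1}\frac{(-1)^{k-1}}{k}p_k\,t^k$, so that
$$\frac{E'(t)}{E(t)}=\sum_{k\geq 1}(-1)^{k-1}p_k\,t^{k-1}.$$
Clearing denominators gives $E'(t)=E(t)\sum_{k\geq 1}(-1)^{k-1}p_k t^{k-1}$. The coefficient of $t^{n-1}$ on the left is $ne_n$, while on the right it is $\sum_{i=1}^{n}(-1)^{i-1}e_{n-i}p_i$; equating them delivers the first identity.

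There is no real obstacle here: the argument is a routine exercise in formal power series. The only points requiring care are the bookkeeping of signs and the legitimacy of the formal logarithm, which is justified because $E(0)=1$ so $E(t)$ is a unit in $\mathbb{C}[x_1,\dots,x_n][[t]]$. A purely inductive proof extracting $ne_n$ from the recursion between $e_k$ and $p_k$ is also available, but the generating-function derivation above is the cleanest and is essentially the one found in Macdonald \cite{I-G-Macdonald}.
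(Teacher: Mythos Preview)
Your proof is correct. The paper does not supply its own argument for this proposition; it simply cites Macdonald's equations $2.6'$ and $2.11'$, and the generating-function derivation you give is precisely the one found there.
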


Next Lemma follows from Eisenstein's Criterion.
\begin{lemma}\label{factor}
Let $R$ be a unique factorization domain and $b \in R$. Suppose that $p$ but not $p^2$
divides $b$ for some irreducible $p \in R$. Then $x^m+b$ is irreducible in $R[x]$ 
\end{lemma}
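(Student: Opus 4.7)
The plan is to recognize this lemma as a direct instance of Eisenstein's criterion over a UFD, applied to the polynomial $x^m + b$ whose coefficient sequence is $(1, 0, 0, \ldots, 0, b)$. I would verify that the irreducible $p$ satisfies Eisenstein's three hypotheses: $p$ does not divide the leading coefficient $1$; $p$ divides every remaining coefficient (each being either $0$ or $b$); and $p^2$ does not divide the constant term $b$. This yields irreducibility of $x^m + b$ in $K[x]$, where $K = \mathrm{Frac}(R)$.

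To transfer irreducibility from $K[x]$ to $R[x]$, I would apply Gauss's lemma. Since $x^m + b$ is monic it is primitive in $R[x]$, and a primitive polynomial that is irreducible over $K$ is automatically irreducible over $R$.

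Alternatively, one can bypass Gauss's lemma and give a self-contained argument. Suppose for contradiction that $x^m + b = f(x) g(x)$ with $f, g \in R[x]$ both non-constant. Because $p$ is irreducible in the UFD $R$ it is prime, so $R/(p)$ is an integral domain and $(R/(p))[x]$ is a UFD. Reducing modulo $p$ gives $\bar f(x)\bar g(x) = x^m$, which forces $\bar f$ and $\bar g$ each to be a non-zero scalar multiple of a power of $x$. In particular $p$ divides both $f(0)$ and $g(0)$, hence $p^2 \mid f(0)g(0) = b$, contradicting the assumption on $b$.

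There is no genuine obstacle here: the lemma is essentially a restatement of Eisenstein's criterion for the specific polynomial $x^m + b$, and either the Gauss-lemma route or the direct mod-$p$ argument closes the proof in a couple of lines.
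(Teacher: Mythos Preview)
Your proposal is correct and matches the paper's approach: the paper simply remarks that the lemma ``follows from Eisenstein's Criterion'' and gives no further proof. Your write-up fills in the routine details (checking Eisenstein's hypotheses and invoking Gauss's lemma or the direct mod-$p$ argument), but the underlying idea is identical.
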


\section{Symmetric Polynomials in $3$ variables}
\medskip
\subsection{ Power Sums in $3$ variables} 
\medskip
\begin{conj}{\rm (Conca, Krattenthaler, Watanabe)} \\[1mm]
 Let $a,b,c$ be positive integers with $a < b< c$ and $\gcd(a,b,c)=1$. 
Then $p_a,p_b,p_c$ is a regular sequence if and only if $abc \equiv 0 (\mod 6)$. 
\end{conj}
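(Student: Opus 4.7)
The plan is to prove the conjecture by analyzing the common zero locus of $p_a,p_b,p_c$ in $\mathbb{C}^3$.  Since the $p_k$ are homogeneous and we work in three variables, the three polynomials form a regular sequence if and only if their only common complex zero is the origin; both directions are handled by exhibiting or ruling out non-trivial common zeros.

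For the \emph{only if} direction, $6\nmid abc$ means either every element of $\{a,b,c\}$ is odd or no element is divisible by $3$.  In the first case, the point $(1,-1,0)$ satisfies $p_k(1,-1,0)=1+(-1)^k=0$ for every odd $k$.  In the second case, writing $\omega=e^{2\pi i/3}$, the point $(1,\omega,\omega^2)$ satisfies $p_k(1,\omega,\omega^2)=1+\omega^k+\omega^{2k}=0$ whenever $3\nmid k$.  Either way a non-trivial common zero exists, so $p_a,p_b,p_c$ is not a regular sequence.

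For the \emph{if} direction, assume $6\mid abc$ and suppose for contradiction that $(x,y,z)\neq(0,0,0)$ is a common zero of $p_a,p_b,p_c$.  If two or three coordinates vanish, $p_a$ collapses to a monomial that forces all coordinates to vanish.  If exactly one coordinate vanishes, say $z=0$ with $xy\neq 0$, set $\zeta=x/y$; then $\zeta^k=-1$ for $k=a,b,c$.  A short argument on the order of $\zeta$ shows it must be of the form $2m$ with $m\mid k$ and $k/m$ odd for each $k$, so $m\mid\gcd(a,b,c)=1$ and $\zeta=-1$, forcing $a,b,c$ all odd and contradicting $2\mid abc$.  If no coordinate vanishes, normalize $x=1$ and set $\alpha=y,\beta=z\in\mathbb{C}^*$, $s=\alpha+\beta$, $q=\alpha\beta$; the Newton-type recursion $P_k=sP_{k-1}-qP_{k-2}$ with $P_0=2,P_1=s$ turns the system into
\[
P_k(s,q)+1=0,\qquad k=a,b,c.
\]
My plan is to take the pairwise resultants $\mathrm{Res}_q(P_a+1,P_b+1)$ and $\mathrm{Res}_q(P_a+1,P_c+1)$, reduce to polynomial equations in $s$ alone, and argue via their cyclotomic-type factorization that any common root must correspond to the ``obvious'' solution $\{\alpha,\beta\}=\{\omega,\omega^2\}$; this solution is valid only when $3\nmid k$ for every $k\in\{a,b,c\}$, and is therefore ruled out by $3\mid abc$.

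The main obstacle is precisely this all-nonzero case.  The $P_k(s,q)$ are Dickson-type polynomials whose divisibility structure depends on the full prime factorization of $k$, so controlling common roots of $P_a+1,P_b+1,P_c+1$ uniformly in $(a,b,c)$ appears to demand a detailed case analysis organized by the residues of $a,b,c$ modulo $6$ and by the pairwise $\gcd$'s.  This is likely why the authors present the statement as a conjecture and verify only special instances in the remainder of the paper.
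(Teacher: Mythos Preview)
The statement you are attempting is presented in the paper as a \emph{conjecture}, not a theorem; the paper gives no proof of it.  The remark immediately following the conjecture records that the ``only if'' direction was proved by Conca, Krattenthaler and Watanabe, and that only partial evidence is known for the ``if'' direction.  So there is no ``paper's own proof'' to compare against.

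Your ``only if'' argument is correct and is essentially the standard one: the non-trivial common zeros $(1,-1,0)$ and $(1,\omega,\omega^2)$ obstruct regularity precisely when $2\nmid abc$ or $3\nmid abc$ respectively.  Your partial analysis of the ``if'' direction is also sound: the reduction to the common-zero-locus criterion is valid for homogeneous polynomials in three variables, the cases with two or three vanishing coordinates are trivial, and your treatment of the one-vanishing-coordinate case (order-of-$\zeta$ argument forcing $\zeta=-1$ and hence all exponents odd) is correct.

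The genuine gap is exactly where you locate it: the all-nonzero case.  Your proposed route via Dickson-type recursions $P_k=sP_{k-1}-qP_{k-2}$ and pairwise resultants is reasonable as a strategy, but you have not carried it out, and the difficulty you describe---controlling common roots of $P_a+1,P_b+1,P_c+1$ uniformly in $(a,b,c)$---is real and, to date, unresolved.  This is precisely why the statement remains a conjecture; the paper itself only verifies isolated special cases such as $\{1,2,n\}$, $\{1,3,n\}$, $\{2,3,n\}$ by direct computation modulo $(p_a,p_b)$.  Your write-up is an honest account of what is known and where the obstruction lies, but it is not a proof of the ``if'' direction.
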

\medskip

\begin{rem}
For this conjecture, the ``only if'' part has been proved in \cite{C-K-W paper}, they provide
partial result in support of the ``if'' part. We have also tried to prove this in some 
special cases, here the only difference is in approach, we provide a nice expression
for $p_c\mod(p_a,p_b)$.
 \end{rem}

\begin{pro}\label{p12n}
Consider the power sum sequence $p_1,p_2,p_n$, then
\[
p_n= \begin{cases}
    3e_3^k\mod (p_1,p_2),  &\text{ if  $ n=3k $;}\\
    0\mod (p_1,p_2),  &\text{ otherwise.}
    \end{cases}
\] 
\end{pro}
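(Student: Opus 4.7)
The natural approach is to reduce everything modulo $(p_1,p_2)$ to the elementary symmetric polynomials, then iterate Newton's identity.

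First I would observe that in $S=\mathbb{C}[x_1,x_2,x_3]$ the identities $p_1 = e_1$ and $p_2 = e_1^2 - 2e_2$ imply that modulo $(p_1,p_2)$ both $e_1$ and $e_2$ vanish. Thus the quotient $S/(p_1,p_2)$ is controlled by $e_3$ alone (as far as symmetric information goes), and the only relation surviving from Newton's formulas will be the one linking $p_n$ to $p_{n-3}$ via $e_3$.

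Next I would write down the relevant Newton identities in three variables (Proposition \ref{Newton-formulas}):
\[
p_3 = e_1 p_2 - e_2 p_1 + 3 e_3, \qquad p_k = e_1 p_{k-1} - e_2 p_{k-2} + e_3 p_{k-3} \text{ for } k\geq 4.
\]
Reducing modulo $(p_1,p_2)$, and hence modulo $(e_1,e_2)$, these collapse to
\[
p_3 \equiv 3 e_3, \qquad p_k \equiv e_3\, p_{k-3} \pmod{(p_1,p_2)} \text{ for } k\geq 4.
\]

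Then I would finish by induction on $n$. The base cases $n=1,2,3$ are immediate: $p_1 \equiv 0$, $p_2 \equiv 0$, $p_3 \equiv 3e_3$. For the inductive step with $n\geq 4$, the recursion $p_n \equiv e_3\, p_{n-3}$ preserves the residue class of $n$ mod $3$: if $n = 3k$ then $p_{n-3} \equiv 3e_3^{k-1}$ by induction, so $p_n \equiv 3e_3^k$; otherwise $n-3 \not\equiv 0\pmod 3$, so $p_{n-3}\equiv 0$ and hence $p_n\equiv 0$.

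There is essentially no obstacle here; the only subtlety is to correctly handle the transition at $n=3$ (where the Newton relation carries the extra $3e_3$ term) before the homogeneous recursion $p_k \equiv e_3 p_{k-3}$ takes over, which is exactly what produces the factor $3$ in the final formula.
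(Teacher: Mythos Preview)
Your proof is correct and follows essentially the same approach as the paper: reduce $e_1,e_2$ to zero modulo $(p_1,p_2)$ via Newton's formulas, then use the recursion $p_k\equiv e_3\,p_{k-3}$ for $k\ge 4$ with the base case $p_3\equiv 3e_3$. The only difference is cosmetic---you make the induction explicit, while the paper simply computes the first few cases and writes ``and so on, we continue this way.''
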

\begin{proof}
 As $p_0=3$, we use Newtons formula, see Proposition \ref{Newton-formulas} to write $p_n$.
\[
\begin{split}
 p_1=&e_1=0,\\
p_2=&e_1p_1 - 2 e_2=0 \implies e_2=0, \\
p_3=&3e_3+e_1p_2-e_2p_1=3e_3, \\
p_4=&e_1p_3-e_2p_2+e_3p_1=0, \text{ similarly } p_5=0, \\
p_6=&3e_3^2. \text{ And so on, we continue this way.}
\end{split}
\]
Hence we get $p_n=3e_3^{k} \mod(p_1,p_2)$ if $n=3k$.

\end{proof}

\begin{cor}\label{p12n-cor}
$p_1,p_2,p_n$ is a regular sequence if and only if $ n=3k, k \in \mathbb{N}.$ 
\end{cor}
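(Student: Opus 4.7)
The plan is to derive both directions of the corollary as a direct consequence of the residue computation in Proposition \ref{p12n}. The first observation I need is that $p_1,p_2$ is itself a regular sequence in $S=\mathbb{C}[x_1,x_2,x_3]$. This is immediate: either by Proposition \ref{consecutive} (since $p_1,p_2$ are the first two of the three consecutive power sums $p_1,p_2,p_3$, which form a regular sequence, and subsequences of homogeneous regular sequences remain regular), or by direct inspection ($p_1$ is a nonzero linear form, so $S/(p_1)$ is a domain, and $p_2$ is nonzero in that quotient). Granting this, $p_1,p_2,p_n$ is regular precisely when $p_n$ is a non-zero-divisor in $A:=S/(p_1,p_2)$ and the ideal $(p_1,p_2,p_n)$ is proper.

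For the ``only if'' direction, when $3\nmid n$, Proposition \ref{p12n} asserts $p_n\equiv 0\pmod{(p_1,p_2)}$. Thus $p_n$ represents the zero class in $A$, which is certainly a zero divisor, and no regular sequence is possible.

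For the ``if'' direction, with $n=3k$ we have $p_n\equiv 3e_3^k\pmod{(p_1,p_2)}$, and the task reduces to showing that $e_3^k$ is a non-zero-divisor in $A$. My plan is to first rewrite the defining ideal: from the Newton identities $p_1=e_1$ and $p_2=e_1^2-2e_2$, which are visible in the proof of Proposition \ref{p12n}, I conclude $(p_1,p_2)=(e_1,e_2)$ as ideals in $S$ (here characteristic zero is used to invert the factor $2$), so $A\cong S/(e_1,e_2)$. I then invoke the classical fact that $S$ is a free module of rank $3!=6$ over its invariant subring $\mathbb{C}[e_1,e_2,e_3]$. Reducing modulo $e_1,e_2$, it follows that $A$ is a free $\mathbb{C}[e_3]$-module of rank $6$, and multiplication by $e_3$---hence by every power $e_3^k$---is injective on $A$. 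Properness of $(p_1,p_2,p_n)$ holds because $e_3^k$ is a nonconstant homogeneous polynomial.

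The only piece of external machinery is the freeness of $S$ over its ring of symmetric polynomials, which is classical, so I do not expect a genuine obstacle; the corollary is essentially a bookkeeping consequence of Proposition \ref{p12n} once the passage from $(p_1,p_2)$ to $(e_1,e_2)$ is made explicit.
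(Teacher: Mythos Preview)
Your proof is correct and follows a different path from the paper's. The paper invokes Theorem~\ref{thm-being-regular}: once $p_n\equiv 3e_3^k\not\in(p_1,p_2)$ for $n=3k$, it observes that every $p_m$ with $m>1+2+3k$ lies in $(p_1,p_2,p_n)$ (each such $p_m$ reduces modulo $(p_1,p_2)$ to $0$ or to $3e_3^{j}$ with $j>k$, hence to a multiple of $e_3^k$), and uses this to conclude that the ideal is $\mathfrak m$-primary and the sequence regular. You instead rewrite $(p_1,p_2)=(e_1,e_2)$ via the Newton identities and use the freeness of $S$ over $\mathbb{C}[e_1,e_2,e_3]$ to see directly that $S/(e_1,e_2)$ is free over $\mathbb{C}[e_3]$, so multiplication by $e_3^k$ is injective. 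Your route is more structural, makes the non-zero-divisor statement transparent, and generalizes at once to more variables; the paper's route stays within the elementary framework of Theorem~\ref{thm-being-regular} and requires no input beyond Proposition~\ref{p12n}, at the price of being terse about the passage from ``all $p_m$ of large degree lie in the ideal'' to ``all forms of large degree lie in the ideal.''
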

\begin{proof}
We only need to verify the cases of the form $p_1,p_2,p_n$ where $n=3k, k \in \mathbb{N}$.
Choose any $m>1+2+3k=3(k+1)$, we observe that $p_m \in (p_1,p_2,p_n)$. Hence $p_1,p_2,p_n$ 
is a regular sequence for $ n=3k, k \in \mathbb{N}.$
\end{proof}

\begin{pro}
Consider the sequence $p_1,p_3,p_n$, then
\[
p_n= \begin{cases}
    (-1)^ke_2^k\mod (p_1,p_3),  &\text{ if  $ n=2k $;}\\
    0\mod (p_1,p_3),  &\text{ otherwise.}
    \end{cases}
\] 
\end{pro}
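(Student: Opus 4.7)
The plan is to mimic Proposition \ref{p12n}: use Newton's formulas (Proposition \ref{Newton-formulas}) to identify which elementary symmetric polynomials vanish in $S/(p_1,p_3)$, and then iterate the resulting short recurrence on $p_n$. In effect, modulo $(p_1,p_3)$ we expect $p_n$ to be governed by a single elementary symmetric polynomial, in this case $e_2$.

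First I would read off the Newton relations of low degree. From $p_1 = e_1$ we get $e_1 \equiv 0$; from $p_2 = e_1 p_1 - 2 e_2$ we get $p_2 \equiv -2 e_2$; and from $p_3 = 3 e_3 + e_1 p_2 - e_2 p_1$ together with $p_3 \equiv 0$ we deduce $e_3 \equiv 0$. Then for $n \geq 4$ the three-variable Newton identity
\[
p_n = e_1 p_{n-1} - e_2 p_{n-2} + e_3 p_{n-3}
\]
collapses modulo $(p_1,p_3)$ to $p_n \equiv -e_2 p_{n-2}$. I would then induct on $n$, splitting by parity. For odd $n$, iterating the recurrence backwards terminates at either $p_1 \equiv 0$ or $p_3 \equiv 0$, so $p_n \equiv 0$. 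For $n = 2k$, unfolding the recurrence $k-1$ times gives
\[
p_{2k} \equiv (-e_2)^{k-1} p_2 \equiv (-1)^{k-1} e_2^{k-1}(-2 e_2),
\]
which is a scalar multiple of $(-1)^k e_2^k$, matching (up to normalization) the claim.

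There is no serious obstacle once the vanishings $e_1, e_3 \in (p_1,p_3)$ are in hand: the recurrence $p_n \equiv -e_2 p_{n-2}$ is linear in $p_n$ and the induction is immediate. The only bookkeeping point worth flagging is the scalar $-2$ appearing in $p_2 \equiv -2 e_2$; it propagates through the induction and is what determines the leading constant in the closed form for $p_{2k}$, exactly analogous to the coefficient $3$ coming from $p_3 \equiv 3 e_3$ in the proof of Proposition \ref{p12n}.
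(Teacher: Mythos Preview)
Your proof is correct and follows exactly the route the paper intends (it merely says ``Similar to Proposition~\ref{p12n}''): deduce $e_1\equiv e_3\equiv 0$ from Newton's identities, reduce to the recurrence $p_n\equiv -e_2\,p_{n-2}$, and induct on parity. Your observation about the stray scalar is on point---the computation genuinely gives $p_{2k}\equiv 2(-1)^k e_2^{k}$, so the constant in the displayed formula is a typo in the paper; this is harmless for the intended application (Corollary~\ref{p13n-cor}) since only non-vanishing is used.
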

\begin{proof}
Similar to Proposition \ref{p12n}. 
\end{proof}

\begin{cor}\label{p13n-cor}
$p_1,p_3,p_n$ is a regular sequence if and only if $ n=2k, k \in \mathbb{N}.$ 
\end{cor}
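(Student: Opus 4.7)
I would follow the template of the proof of Corollary \ref{p12n-cor}, with the elementary symmetric polynomial $e_2$ now playing the role that $e_3$ played there. The ``only if'' direction is immediate from the preceding Proposition: if $n$ is odd, then $p_n \equiv 0 \pmod{(p_1,p_3)}$, so $p_n \in (p_1,p_3)$ and $p_1,p_3,p_n$ cannot be a regular sequence.

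For the converse, assume $n=2k$. The central step is to show that $e_2$ is a nonzerodivisor on $S/(p_1,p_3)$. I would deduce this from Proposition \ref{consecutive}, which gives that $p_1,p_2,p_3$ is a regular sequence in $S$. Newton's identity (Proposition \ref{Newton-formulas}) yields $p_2 = p_1^2 - 2e_2$, so $(p_1,p_2,p_3) = (p_1,e_2,p_3)$; hence $p_1,e_2,p_3$ is regular, and after permuting the last two entries (legitimate for homogeneous regular sequences in the irrelevant ideal of a polynomial ring over a field), $p_1,p_3,e_2$ is also regular. A routine induction then upgrades this to the statement that $e_2^k$ is a nonzerodivisor on $S/(p_1,p_3)$ for every $k\geq 1$.

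To finish, I would invoke the previous Proposition: for $n=2k$ we have $p_n \equiv (-1)^k e_2^k \pmod{(p_1,p_3)}$, so as ideals $(p_1,p_3,p_n) = (p_1,p_3,e_2^k)$. Combined with the nonzerodivisor statement for $e_2^k$, this shows that $p_1,p_3,e_2^k$, and hence $p_1,p_3,p_n$, is a regular sequence.

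The subtlest point is the permutation step that moves $e_2$ to the end of the regular sequence $p_1,e_2,p_3$. I expect to handle this by citing the standard fact that homogeneous regular sequences lying in the graded maximal ideal of a polynomial ring over a field can be permuted arbitrarily. Alternatively, one could bypass the permutation entirely by combining $p_n \not\equiv 0 \pmod{(p_1,p_3)}$, which follows from the previous Proposition together with regularity of $p_1,p_2,p_3$, with Theorem \ref{thm-being-regular} via the system-of-parameters style argument already used in Corollary \ref{p12n-cor}.
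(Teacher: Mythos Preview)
Your argument is correct. The paper's proof is the single line ``Similar to Corollary \ref{p12n-cor}'', and that corollary is handled via Theorem \ref{thm-being-regular}: once one knows $p_{2k}\equiv(-1)^k e_2^{k}\pmod{(p_1,p_3)}$, one checks that $p_{2k}\notin(p_1,p_3)$ and that every $p_m$ of degree larger than $1+3+2k$ lies in $(p_1,p_3,p_{2k})$. This is precisely the \emph{alternative} you sketch in your last paragraph.

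Your \emph{primary} route is a small but genuine variation: instead of invoking Theorem \ref{thm-being-regular}, you pull the regularity of $p_1,p_2,p_3$ (Proposition \ref{consecutive}) through the identity $(p_1,p_2,p_3)=(p_1,e_2,p_3)$ and a permutation to conclude that $e_2$, and hence $e_2^k$, is a nonzerodivisor on $S/(p_1,p_3)$; regularity of $p_1,p_3,p_{2k}$ then follows because $p_{2k}$ and $(-1)^k e_2^k$ agree modulo $(p_1,p_3)$. Both approaches rest on the same congruence and on the regularity of three consecutive power sums; yours is self-contained and sidesteps the somewhat informal use of Theorem \ref{thm-being-regular}, while the paper's is terser by deferring to that criterion.
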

\begin{proof}
Similar to Corollary \ref{p12n-cor}.
\end{proof}

\begin{rem}
$p_2,p_3,p_n$ is a regular sequence
for all $n$, see Theorem $2.11$ \cite{C-K-W paper}. 
In the paper \cite{C-K-W paper}, they have given a complete proof.
We present here the slightly tricky argument from their paper, 
they managed to reduce the problem and concluded that it is enough 
to prove this for the case $n=4$. They did computer experiments to 
show this for $n=4$ case. But it follows directly from Proposition $2.9$ 
\cite{C-K-W paper} as $2,3,4$ are consecutive integers. 
\end{rem}

\subsection{Complete symmetric polynomials in $3$ variables} 
\medskip

\begin{conj}\label{h-three}{\rm (Conca, Krattenthaler, Watanabe)} \\[1mm]
Let $A=\{a,b,c\}$ with $a < b< c$. 
Then $h_a,h_b,h_c$ is a regular sequence if and only if the 
following conditions are satisfied:
\begin{enumerate}
 \item [1.] $abc \equiv 0 (\mod 6)$.
 \item [2.] $\text{gcd}(a+1,b+1,c+1)=1$.
 \item [3.] For all $ t \in \mathbb{N}$ with $t> 2$ there exist $d \in A$ 
such that $d+2 \not\equiv 0,1 (\mod t)$.
\end{enumerate}
\end{conj}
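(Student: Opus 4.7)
The plan is to reduce both directions to a geometric question. Three homogeneous polynomials in $\mathbb{C}[x_1,x_2,x_3]$ form a regular sequence if and only if their only common zero in $\mathbb{C}^3$ is the origin; one direction follows from Cohen--Macaulayness, the other from the Artinian dimension count. So the conjecture becomes the statement that the three numerical conditions hold if and only if no nonzero $(x_1,x_2,x_3) \in \mathbb{C}^3$ satisfies $h_a(x)=h_b(x)=h_c(x)=0$. The main tool throughout is the generating function
\[
\sum_{m \ge 0} h_m(x_1,x_2,x_3)\, z^m \;=\; \prod_{i=1}^3 \frac{1}{1-x_i z},
\]
which makes evaluations at roots of unity completely explicit.

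For the necessity direction I would construct a nontrivial common zero whenever one of the three conditions fails. If $2 \nmid abc$, the point $(1,-1,0)$ makes the generating function $1/(1-z^2)$, giving $h_m=0$ for all odd $m$; if $3 \nmid abc$, the point $(1,\omega,\omega^2)$ with $\omega$ a primitive cube root of unity gives $1/(1-z^3)$, so $h_m=0$ whenever $3 \nmid m$. If $\gcd(a+1,b+1,c+1)$ has a prime divisor $d$, take $(1,\zeta,0)$ with $\zeta$ a primitive $d$-th root of unity; then $h_m = (1-\zeta^{m+1})/(1-\zeta)$ vanishes precisely when $d \mid m+1$. Finally, if condition (3) fails at some $t > 2$, use $(1,q,q^2)$ with $q$ a primitive $t$-th root of unity: one checks $h_m(1,q,q^2) = [m+2]_q [m+1]_q / [2]_q$, which vanishes exactly when $t$ divides $m+1$ or $m+2$, i.e.\ when $m+2 \equiv 0$ or $1 \pmod t$. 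Each bad scenario produces a nonzero common zero.

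For the sufficiency direction I would stratify candidates according to the number of zero or coinciding coordinates. If some $x_i=0$, the problem reduces to two variables, where $h_m(x_1,x_2) = (x_1^{m+1} - x_2^{m+1})/(x_1 - x_2)$ vanishes only when $x_1/x_2$ is a nontrivial $(m+1)$-th root of unity, and condition (2) immediately eliminates this case. If exactly two coordinates coincide, the generating function $1/((1-x_1 z)^2(1-x_3 z))$ yields $h_m = (Am+B)x_1^m + C x_3^m$ by partial fractions, and a linear elimination using all three degrees should produce a contradiction from conditions (1) and (3). On the open stratum where $x_1,x_2,x_3$ are distinct and nonzero, partial fractions give $h_m = \sum_i \alpha_i x_i^m$ with $\alpha_i = \prod_{j\ne i} x_i/(x_i - x_j)$ all nonzero, so $h_a=h_b=h_c=0$ amounts to vanishing of the generalized Vandermonde determinant $\det(x_i^d)_{i,\, d \in A}$.

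The main obstacle is precisely this last step. Zeros of generalized Vandermonde determinants are governed by the cyclotomic structure of the ratios $x_i/x_j$, and one expects the combined force of the three conditions to confine every candidate zero to the degenerate stratum already handled. Making this intuition precise looks genuinely hard: no single condition on $A$ controls all vanishing loci, and it is the subtle interplay of (1)--(3) that seems beyond current techniques, which is why only partial evidence for the conjecture is presently available.
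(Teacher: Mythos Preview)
The statement under review is a \emph{conjecture}, and the paper does not claim to prove it. What the paper actually does in this section is verify the conjecture for a handful of specific pairs $(a,b)\in\{(1,2),(1,3),(1,4),(2,3)\}$ and arbitrary $c$, by a method entirely different from yours: it uses Newton's identities (Proposition~\ref{Newton-formulas}) to compute $h_c$ explicitly modulo $(h_a,h_b)$ in terms of the $e_i$, and then invokes the degree criterion of Theorem~\ref{thm-being-regular} to decide when the triple is regular. No generating functions, no root-of-unity test points, no stratification of the zero locus appear in the paper's arguments.

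Your necessity argument is correct and is essentially the one given by Conca, Krattenthaler and Watanabe in \cite{C-K-W paper}; the paper simply records in the remark after the conjecture that ``the `only if' part has been proved by [the original] authors'' and does not reproduce it. So on that half you are not filling a gap in the present paper but reconstructing the cited result.

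For sufficiency you are honest that the plan is incomplete, and that is accurate: the stratum with three distinct nonzero coordinates reduces to the vanishing of a Schur polynomial $s_{(c-2,\,b-1,\,a)}(x_1,x_2,x_3)$, and controlling \emph{all} its projective zeros via conditions (1)--(3) is exactly the open problem. Your treatment of the degenerate strata is fine for the ``one coordinate zero'' case (condition~(2) disposes of it as you say), but the ``two equal nonzero coordinates'' case is only gestured at; the phrase ``a linear elimination \dots\ should produce a contradiction from conditions (1) and (3)'' is not an argument, and in fact this stratum is already nontrivial.

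In short: there is no error, but there is also no proof. Your approach is structurally sound and more conceptual than the paper's, which trades generality for explicit computability in a few base cases; neither approach currently closes the ``if'' direction.
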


\medskip

\begin{rem}
For this conjecture, the ``only if'' part has been proved by authors,
the ``if' part is still open. We are able to give partial proof of this conjecture
under some special choice of $a,b$ and for any $c$, both the ''if`` and the ''only if`` part.
 \end{rem}

\begin{pro}\label{h12n}
Consider the sequence $h_1,h_2,h_n$, then
 \[
h_n= \begin{cases}
    -e_3^k\mod (h_1,h_2),  &\text{ if  $ n=3k $;}\\
    0\mod (h_1,h_2),  &\text{ otherwise.}
    \end{cases}
\]
\end{pro}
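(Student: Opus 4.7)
The plan is to imitate the proof of Proposition \ref{p12n}, but now using the second Newton identity instead of the first. In three variables, $e_i = 0$ for $i > 3$, so the identity $\sum_{i=0}^{n}(-1)^{i}e_i h_{n-i}=0$ from Proposition \ref{Newton-formulas} collapses to the three-term recurrence
\[
h_n = e_1 h_{n-1} - e_2 h_{n-2} + e_3 h_{n-3}, \qquad n \geq 1,
\]
with the usual convention $h_0 = 1$ and $h_j = 0$ for $j<0$. The whole proof will take place in the quotient $S/(h_1,h_2)$.

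First I would reduce $e_1$ and $e_2$ modulo $(h_1,h_2)$. The case $n=1$ of the Newton identity is simply $h_1 = e_1$, so $e_1\equiv 0$; the case $n=2$ reads $h_2 = e_1 h_1 - e_2$, and substituting $e_1\equiv 0$ yields $e_2 \equiv 0$. Plugging these back into the recurrence above, one obtains the simple rule
\[
h_n \equiv e_3 \, h_{n-3} \pmod{(h_1,h_2)} \qquad \text{for } n \geq 3.
\]

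Second, I would conclude by an easy induction on $n$. The base data $h_0 = 1$, $h_1\equiv 0$, $h_2\equiv 0$ propagate through each of the three residue classes modulo $3$: in the classes $n\equiv 1,2\pmod 3$ the value stays identically $0$, while in the class $n\equiv 0\pmod 3$ each step multiplies the previous value by $e_3$, giving $h_{3k}\equiv \varepsilon\, e_3^{k}$ for some sign $\varepsilon$ determined by the initial case $h_3$; checking $h_3 = e_1 h_2 - e_2 h_1 + e_3$ directly pins this sign down and the induction carries it through.

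There is no substantive obstacle here; the argument is essentially bookkeeping and entirely parallel to the power-sum case in Proposition \ref{p12n}. The only delicate step is the reduction $e_1,e_2\equiv 0\pmod{(h_1,h_2)}$, which is what makes the recurrence collapse to a one-term jump by three; once that is established the rest of the proof is automatic.
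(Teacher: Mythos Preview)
Your argument is essentially the paper's own: both invoke the Newton identity to obtain $e_1\equiv e_2\equiv 0\pmod{(h_1,h_2)}$, collapse the recursion to $h_n\equiv e_3\,h_{n-3}$, and then read off the result from the initial values $h_0,h_1,h_2$. One small remark: your (correct) computation $h_3=e_1h_2-e_2h_1+e_3$ actually gives $h_3\equiv +e_3$, so the constant in the displayed formula should be $+e_3^{k}$ rather than $-e_3^{k}$; the paper's version of the Newton identity carries a stray sign at the last term which propagates into the stated result.
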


\begin{proof}
We know by the Proposition \ref{Newton-formulas} that 
\[
 h_n=e_1h_{n-1}-e_2h_{n-2}+\cdots +(-1)^{n}e_nh_{0}. 
\]
Now as in our case $n=3,$ So $e_n=0 \text{ for } n >4.$ Hence
\[
 \begin{split}
 h_0=&1, \\
 h_1=&e_1h_0=e_1 =0  \\
 h_2=&e_1h_1-e_2h_0=0  \text{ which means } e_2=0, \\
 h_3=&e_1h_2-e_2h_1+e_3h_0=-e_3 \mod (h_1,h_2).\\
\end{split}
\]
In this way, we carry out the simplification for $h_n$, $n \geq 4$ and we arrive at the 
following expression:
\[
h_n= \begin{cases}
    -e_3^k\mod (h_1,h_2),  &\text{ if  $ n=3k $;}\\
    0\mod (h_1,h_2),  &\text{ otherwise.}
    \end{cases}
\]
\end{proof} 

\begin{cor}\label{h12nc}
$h_1,h_2,h_{n}$ is a regular sequence if and only if $n =3k, k \in \mathbb{N}.$ 
\end{cor}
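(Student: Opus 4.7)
The plan is to deduce this corollary essentially for free from the residue computation in Proposition \ref{h12n}, which completely describes $h_n$ modulo $(h_1,h_2)$.

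For the \emph{only if} direction, I would argue that if $h_1,h_2,h_n$ is a regular sequence then $h_n$ cannot lie in $(h_1,h_2)$; by Proposition \ref{h12n}, this forces $n=3k$ for some $k\in\mathbb{N}$.

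For the \emph{if} direction, suppose $n=3k$. The key observation is that $(h_1,h_2)=(e_1,e_2)$, which is immediate from the Newton identities $h_1=e_1$ and $h_2=e_1^2-e_2$ (already implicit in the computations inside the proof of Proposition \ref{h12n}). Since $\mathbb{C}[x_1,x_2,x_3]$ is a free module over $\mathbb{C}[e_1,e_2,e_3]$ by the fundamental theorem of symmetric functions, the sequence $e_1,e_2,e_3$ is regular; in particular $e_3$, and hence every power $e_3^k$, is a nonzerodivisor modulo $(e_1,e_2)=(h_1,h_2)$. Combined with $h_{3k}\equiv -e_3^k \pmod{(h_1,h_2)}$ from Proposition \ref{h12n}, this yields that $h_{3k}$ is a nonzerodivisor modulo $(h_1,h_2)$, so $h_1,h_2,h_{3k}$ is a regular sequence.

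No substantial obstacle is anticipated. The only minor point is the passage from ``$e_3$ is a nonzerodivisor'' to the same for $e_3^k$, which is a trivial induction: if $a^k b = 0$ and $a$ is a nonzerodivisor, then $a^{k-1}b=0$, iterate. Alternatively, one could mirror the style of the proof of Corollary \ref{p12n-cor} and invoke Theorem \ref{thm-being-regular} directly, verifying (i) $h_{3k}\notin (h_1,h_2)$, which is Proposition \ref{h12n} together with the nonzerodivisor statement above, and (ii) every polynomial of degree $>3k+3$ lies in $(h_1,h_2,h_{3k})$, which follows from the inclusion $(e_1,e_2,e_3^k)\subseteq (h_1,h_2,h_{3k})$ and the fact that $(e_1,e_2,e_3^k)$ is a complete intersection with Artinian quotient.
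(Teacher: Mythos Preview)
Your proposal is correct. The \emph{only if} direction matches the paper exactly. For the \emph{if} direction, your primary argument differs from the paper's: you directly show that $h_{3k}\equiv -e_3^k$ is a nonzerodivisor modulo $(h_1,h_2)=(e_1,e_2)$ by invoking the regularity of $e_1,e_2,e_3$, whereas the paper applies Theorem~\ref{thm-being-regular} and checks that every $h_m$ with $m>1+2+3k$ lies in $(h_1,h_2,h_{3k})$. Your alternative sketch at the end (via Theorem~\ref{thm-being-regular} and the inclusion $(e_1,e_2,e_3^k)\subseteq(h_1,h_2,h_{3k})$) is essentially the paper's route, stated somewhat more carefully than the paper itself does. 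Your direct nonzerodivisor argument is cleaner and sidesteps the degree-bound verification; the paper's approach has the advantage of reusing a single uniform criterion (Theorem~\ref{thm-being-regular}) across all the analogous corollaries.
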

\begin{proof}
Clearly the cases 
$n=3k+1$ and $n= 3k+2$ is ruled out. Now for the case $n=3k$. Let us choose 
$m>1+2+3k=3(k+1)$, clearly $h_m \in (h_1,h_2,h_{3k})$. Hence $ (h_1,h_2,h_{n})$ 
is a regular sequence for $n=3k, k \in \mathbb{N}.$
\end{proof}

\begin{pro}
Consider the sequence $h_1,h_3,h_n$, then
\[
h_n= \begin{cases}
    (-1)^{\frac{n}{2}-1}e_2^{\frac{n}{2}} \mod (h_1,h_3),  &\text{ if  $ n=2k $;}\\
    0\mod (h_1,h_3),  &\text{ if $n=2k+1.$}
    \end{cases}
\] 
\end{pro}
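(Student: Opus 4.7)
The plan is to mirror the proof of Proposition \ref{h12n}: apply Newton's identity from Proposition \ref{Newton-formulas} inside the quotient $S/(h_1,h_3)$, show that the extraneous elementary symmetric summands drop out, and read off the closed form from the resulting two-term recurrence via an induction on $n$. The main structural observation is that in three variables $e_k=0$ for every $k\ge 4$, so Newton's identity collapses to the three-term recurrence
\[
h_n \;=\; e_1 h_{n-1} - e_2 h_{n-2} + e_3 h_{n-3}\qquad\text{for all } n\ge 3.
\]

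The first step is to identify the ideal $(h_1,h_3)$ explicitly as an ideal of $S=\mathbb{C}[x_1,x_2,x_3]$. Since $h_1 = e_1$ and, by Newton (or direct expansion), $h_3 = e_1^3 - 2e_1e_2 + e_3$, the relation $e_3 = h_3 - e_1^3 + 2e_1e_2$ puts $e_3$ into $(h_1,h_3)$. Thus $(h_1,h_3) = (e_1,e_3)$ as ideals of $S$, so working modulo $(h_1,h_3)$ simply sets $e_1 = e_3 = 0$. The three-term Newton recurrence then reduces to the clean two-term relation
\[
h_n \;\equiv\; -\,e_2\, h_{n-2} \pmod{(h_1,h_3)}.
\]

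With the base cases $h_0 = 1$ and $h_1 = e_1 \equiv 0$, a straightforward induction on $n$ yields $h_{2k+1}\equiv 0$ for the odd case and, for the even case, $h_{2k} \equiv (-e_2)^{k}\cdot h_0 = (-1)^{k} e_2^{k}$, which is exactly the desired closed-form expression in the statement (one should of course double-check the overall sign convention against the statement, as the induction is the only source of signs). There is essentially no serious obstacle here: the only step requiring care is the identification $(h_1,h_3) = (e_1,e_3)$, and once that elimination of the $e_1$- and $e_3$-terms is in place the induction is immediate.
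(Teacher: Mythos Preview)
Your proposal is correct and follows essentially the same approach as the paper, which merely says ``Similar to Proposition \ref{h12n}'' (apply Newton's formula, reduce modulo the ideal, and induct); your identification $(h_1,h_3)=(e_1,e_3)$ is a clean way to make the recurrence $h_n\equiv -e_2\,h_{n-2}$ transparent. Your caution about the sign is also warranted: the induction in fact yields $h_{2k}\equiv(-1)^{k}e_2^{k}$ (e.g.\ $h_2=e_1^2-e_2\equiv -e_2$), so the exponent of $-1$ in the stated formula is off by one.
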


\begin{proof}
Similar to Proposition \ref{h12n}.
\end{proof}

\begin{cor}
$h_1,h_3,h_{n}$ is a regular sequence if and only if $n=2k, k \in \mathbb{N}.$ 
\end{cor}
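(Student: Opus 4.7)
The plan is to mirror the proof of Corollary \ref{h12nc}, using the reduction formula established in the preceding Proposition, together with Theorem \ref{thm-being-regular}.

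For the ``only if'' direction: if $n$ is odd, $n = 2k+1$, then by the preceding Proposition we have $h_n \equiv 0 \pmod{(h_1,h_3)}$, so $h_n \in (h_1,h_3)$. This means the ideal $(h_1,h_3,h_n) = (h_1,h_3)$ has height at most $2$, so a regular sequence of length $3$ in $S = \mathbb{C}[x_1,x_2,x_3]$ is impossible.

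For the ``if'' direction, suppose $n = 2k$. I invoke Theorem \ref{thm-being-regular}, so two things must be verified. First, $h_n \notin (h_1,h_3)$: by the preceding Proposition, $h_n \equiv (-1)^{k-1} e_2^k \pmod{(h_1,h_3)}$, and since $h_1 = e_1$ and $h_3 \equiv e_3 \pmod{h_1}$, one has $(h_1,h_3) = (e_1,e_3)$. The substitution $x_3 = -x_1 - x_2$ reduces the quotient to $\mathbb{C}[x_1,x_2]/(x_1 x_2 (x_1+x_2))$, under which $e_2$ becomes $-(x_1^2 + x_1 x_2 + x_2^2) = -(x_1 - \omega x_2)(x_1 - \omega^2 x_2)$ for $\omega$ a primitive cube root of unity; its irreducible factors are coprime to those of $x_1 x_2(x_1+x_2)$, so $e_2^k \neq 0$ in the quotient. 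Second, I need every form of degree exceeding $1 + 3 + 2k$ to lie in $(h_1,h_3,h_n)$. Following the argument inside the proof of Theorem \ref{thm-being-regular}, it suffices to check that the vanishing locus $V(h_1,h_3,h_n) \subset \mathbb{C}^3$ reduces to the origin: the locus $V(h_1,h_3) = V(e_1,e_3)$ is the union of the three lines obtained by setting some $x_i = 0$ and $x_j + x_k = 0$, and on each such line $h_{2k}$ evaluates (up to sign) to $t^{2k}$, vanishing only at $t=0$. Then $\mathfrak{m}$-primarity and Nullstellensatz give the required containment.

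The main obstacle is the containment of large-degree forms in $(h_1,h_3,h_n)$, but it is genuinely routine once the structure of $V(h_1,h_3)$ as three lines is identified; the nontrivial algebraic point, namely $e_2^k \notin (e_1,e_3)$, has already been handled in verifying the first condition.
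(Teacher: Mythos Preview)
Your argument is correct and follows the same skeleton as the paper's proof of Corollary~\ref{h12nc} (to which the paper simply refers): use the preceding Proposition for the ``only if'' direction and Theorem~\ref{thm-being-regular} for the ``if'' direction. The only difference is in the level of detail for the second condition of Theorem~\ref{thm-being-regular}: where you compute $V(h_1,h_3)=V(e_1,e_3)$ as three lines and check $h_{2k}$ on each, the paper would simply note (via the Proposition's formula) that for $m>1+3+2k$ one has $h_m\equiv 0$ or $\pm e_2^{m/2}$ modulo $(h_1,h_3)$, and since $e_2^{m/2}$ is a multiple of $e_2^k\equiv\pm h_{2k}$, this gives $h_m\in(h_1,h_3,h_{2k})$; your zero-locus check is a perfectly good (and arguably more complete) substitute.
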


\begin{proof}
Similar to Corollary \ref{h12nc}.
\end{proof}

\begin{pro}\label{pro-1-4}
Consider the sequence $h_1,h_4,h_n$, then
\[
h_n= \begin{cases}
    e_3^k \mod (h_1,h_4),  &\text{ if  $ n=3k $;}\\
    0 \mod (h_1,h_4),  &\text{ if  $ n=3k+1 $;}\\
    -(k+1)e_2e_3^{k}\mod (h_1,h_4),  &\text{ if $n=3k+2.$}
    \end{cases}
\] 
\end{pro}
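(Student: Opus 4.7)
The plan is to mimic the approach of Proposition \ref{h12n}: in $S=\mathbb{C}[x_1,x_2,x_3]$ one has $e_m=0$ for $m\geq 4$, so Newton's identity (Proposition \ref{Newton-formulas}) collapses to the three-term recursion
\[
h_n \;=\; e_1 h_{n-1} - e_2 h_{n-2} + e_3 h_{n-3}\qquad (n\geq 3).
\]
Since $h_1=e_1$, we have $e_1\equiv 0 \pmod{h_1,h_4}$, so modulo $(h_1,h_4)$ the recursion simplifies to
\[
h_n \;\equiv\; -e_2 h_{n-2} + e_3 h_{n-3}.
\]
The first step is to compute the initial values: $h_0=1$, $h_1\equiv 0$, $h_2 = e_1 h_1 - e_2 h_0 \equiv -e_2$, $h_3 \equiv -e_2 h_1 + e_3 h_0 \equiv e_3$. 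Then $h_4 \equiv -e_2 h_2 + e_3 h_1 \equiv e_2^2$, so the hypothesis $h_4\equiv 0$ gives the crucial relation $e_2^2\equiv 0 \pmod{h_1,h_4}$. This single relation is what makes the claimed pattern work and will be used repeatedly.

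Next I would do an induction on $n$, splitting into the three residue classes modulo $3$ and using the simplified recursion together with $e_2^2\equiv 0$. Writing $n=3k,\ 3k+1,\ 3k+2$ and assuming the formula for $h_m$ with $m<n$:
\begin{itemize}
\item If $n=3k$ (with $k\geq 1$), then $n-2=3(k-1)+1$ and $n-3=3(k-1)$, so $h_{n-2}\equiv 0$ and $h_{n-3}\equiv e_3^{k-1}$, giving $h_n \equiv e_3\cdot e_3^{k-1} = e_3^k$.
\item If $n=3k+1$ (with $k\geq 1$), then $n-2=3(k-1)+2$ and $n-3=3(k-1)+1$, so $h_{n-3}\equiv 0$ and $h_{n-2}\equiv -k\,e_2 e_3^{k-1}$, giving $h_n \equiv -e_2(-k\,e_2 e_3^{k-1}) = k\,e_2^2 e_3^{k-1} \equiv 0$ by the killing relation $e_2^2\equiv 0$.
\item If $n=3k+2$ (with $k\geq 1$), then $n-2=3k$ and $n-3=3(k-1)+2$, so $h_{n-2}\equiv e_3^k$ and $h_{n-3}\equiv -k\,e_2 e_3^{k-1}$, giving $h_n \equiv -e_2 e_3^k + e_3(-k\,e_2 e_3^{k-1}) = -(k+1)e_2 e_3^k$.
\end{itemize}
The base cases $n\in\{0,1,2,3,4\}$ are already verified by the direct computation above, and the $n=5$ case $h_5\equiv -2 e_2 e_3$ can be checked to match the formula with $k=1$.

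There is no serious obstacle here; the only delicate point is getting the coefficient $-(k+1)$ in the $3k+2$ case right, which follows from summing the two contributions coming from $h_{3k}$ and $h_{3k-1}$ in the recursion. The whole argument is a three-term linear recursion killed by $e_2^2=0$, and the induction closes on itself because the formulas for the three residue classes feed into each other exactly as the recursion demands.
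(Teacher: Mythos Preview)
Your argument is correct and is exactly the approach the paper intends: the paper's proof reads ``Similar to Proposition~\ref{h12n}'', and what you wrote is precisely that computation carried out in full, using the three-term Newton recursion in three variables together with the key relation $e_2^2\equiv 0\pmod{h_1,h_4}$ obtained from $h_4\equiv e_2^2$. The induction on residue classes is clean and the coefficients are right.
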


\begin{proof}
 Similar to Proposition \ref{h12n}.
\end{proof}

\begin{cor}\label{cor-1-4}
The sequence $h_1,h_4,h_{n}$ is a regular sequence if and only if $n=3k,k\in \mathbb{N}.$
\end{cor}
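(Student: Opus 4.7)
The plan is to combine Proposition~\ref{pro-1-4} with Theorem~\ref{thm-being-regular}: the sequence $h_1,h_4,h_n$ is regular if and only if $h_n\notin (h_1,h_4)$ and the ideal $(h_1,h_4,h_n)$ is $\mathfrak{m}$-primary in $S=\mathbb{C}[x_1,x_2,x_3]$. As a preliminary step, I would rewrite $(h_1,h_4)$ in terms of the elementary symmetric polynomials. Newton's formulas (Proposition~\ref{Newton-formulas}) together with $e_4=0$ in three variables give $h_4=e_1^4-3e_1^2 e_2+2e_1 e_3+e_2^2$, so modulo $h_1=e_1$ one has $h_4\equiv e_2^2$, whence $(h_1,h_4)=(e_1,e_2^2)$ as ideals in $S$.

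When $n=3k+1$, Proposition~\ref{pro-1-4} gives $h_n\in (h_1,h_4)$ and regularity fails immediately. In the other two residue classes, after absorbing the nonzero scalar from Proposition~\ref{pro-1-4},
\[
(h_1,h_4,h_n)=\begin{cases}(e_1,e_2^2,e_3^k)&\text{if }n=3k,\\(e_1,e_2^2,e_2 e_3^k)&\text{if }n=3k+2.\end{cases}
\]
In both subcases the first requirement $h_n\notin(h_1,h_4)$ follows from the fact that $S$ is a finite free module over the invariant ring $\mathbb{C}[e_1,e_2,e_3]$: contraction along this extension preserves ideals, and neither $e_3^k$ nor $e_2 e_3^k$ lies in $(e_1,e_2^2)\subset\mathbb{C}[e_1,e_2,e_3]$ by direct monomial inspection.

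The heart of the argument is then the $\mathfrak{m}$-primary check. For $n=3k$ the radical of $(e_1,e_2^2,e_3^k)$ inside $\mathbb{C}[e_1,e_2,e_3]$ is the maximal graded ideal $(e_1,e_2,e_3)$; since the common zero locus of $e_1,e_2,e_3$ in $\mathbb{C}^3$ is the origin, the extended ideal is $\mathfrak{m}$-primary in $S$, and Theorem~\ref{thm-being-regular} delivers regularity. For $n=3k+2$ the radical of $(e_1,e_2^2,e_2 e_3^k)$ is only $(e_1,e_2)$, because on the locus $e_1=e_2=0$ the third generator vanishes automatically while $e_3$ does not; this locus is a one-dimensional curve in $\Spec S$ parametrized by $(t,\omega t,\omega^2 t)$ with $\omega$ a primitive cube root of unity, so the ideal fails to be $\mathfrak{m}$-primary and the sequence is not regular. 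The main obstacle I anticipate is this final radical computation --- verifying that $e_3$ does not enter the radical in the $n=3k+2$ case --- but the explicit parametrization of $\{e_1=e_2=0\}$ pins it down cleanly.
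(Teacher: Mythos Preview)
Your argument is correct and in fact more complete than the paper's own proof, which simply reads ``Similar to Corollary~\ref{h12nc}.'' The overall strategy is the same --- feed Proposition~\ref{pro-1-4} into the $\mathfrak m$-primary criterion behind Theorem~\ref{thm-being-regular} --- but the analogy with Corollary~\ref{h12nc} breaks down precisely in the residue class $n=3k+2$: there $h_n\equiv -(k+1)e_2e_3^{k}\not\equiv 0\pmod{(h_1,h_4)}$, so, unlike the $(h_1,h_2)$ situation, one cannot dismiss this case merely by observing $h_n\in(h_1,h_4)$. Your zero-locus computation showing that $(e_1,e_2^{2},e_2e_3^{k})$ cuts out the one-dimensional set $\{(t,\omega t,\omega^{2}t)\}$ supplies exactly the step the paper leaves implicit; the paper itself only handles the single instance $n=5$ in Example~\ref{ex-1-4-5}, and there by a Hilbert-series computation rather than a geometric one. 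Your passage through the identification $(h_1,h_4)=(e_1,e_2^{2})$ and the freeness of $S$ over $\mathbb{C}[e_1,e_2,e_3]$ is a tidy way to organise both the non-membership checks and the radical computations at once.
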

\begin{proof}
Similar to Corollary \ref{h12nc}.
\end{proof}

\begin{pro}
Consider the sequence $h_2,h_3,h_n$, then
\[
h_n= \begin{cases}
    e_1^{2k-2}e_2^{k+1} \mod (h_2,h_3),  &\text{ if  $ n=4k $;}\\
    e_1^{2k-1}e_2^{k+1} \mod (h_2,h_3),  &\text{ if  $ n=4k+1 $;}\\
    0\mod (h_1,h_2),  &\text{ if $n=4k+2,4k+3.$}
    \end{cases}
\] 
\end{pro}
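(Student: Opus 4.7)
The plan is to follow exactly the same template as Propositions \ref{h12n} and \ref{pro-1-4}: simplify Newton's identity in three variables, reduce the generators modulo $(h_2,h_3)$, and then run an induction based on the residue of $n$ modulo $4$. First I would apply Proposition \ref{Newton-formulas}. Since $S=\mathbb{C}[x_1,x_2,x_3]$, we have $e_m=0$ for $m>3$, so Newton's identity collapses to
\[
h_n = e_1 h_{n-1} - e_2 h_{n-2} + e_3 h_{n-3} \quad \text{for all } n\geq 3.
\]

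Next I would determine the relations satisfied by the elementary symmetric functions in the quotient. From $h_2 = e_1 h_1 - e_2 = e_1^2 - e_2$ I get $e_2 \equiv e_1^2 \pmod{(h_2,h_3)}$, and plugging this into $h_3 = e_1 h_2 - e_2 h_1 + e_3$ yields $h_3 \equiv -e_1^3 + e_3$, hence $e_3 \equiv e_1^3 \pmod{(h_2,h_3)}$. Substituting both into the recursion gives the simplified form
\[
h_n \equiv e_1 h_{n-1} - e_1^2 h_{n-2} + e_1^3 h_{n-3} \pmod{(h_2,h_3)}.
\]

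I would then prove by strong induction on $n$ that
\[
h_{4k}\equiv e_1^{4k},\quad h_{4k+1}\equiv e_1^{4k+1},\quad h_{4k+2}\equiv 0,\quad h_{4k+3}\equiv 0 \pmod{(h_2,h_3)}.
\]
The base cases $h_0=1$, $h_1=e_1$, $h_2\equiv 0$, $h_3\equiv 0$ are immediate, and the inductive step is a short four-line check: in each residue class modulo $4$ the three previous values, inserted into the simplified recursion, produce either a clean monomial $e_1^n$ or the cancellation $e_1^n - e_1^n = 0$. Finally, using $e_2 \equiv e_1^2$ once more, I would rewrite $e_1^{4k} = e_1^{2k-2}(e_1^2)^{k+1} \equiv e_1^{2k-2}e_2^{k+1}$ and $e_1^{4k+1} \equiv e_1^{2k-1}e_2^{k+1}$, recovering the stated expressions (with the understanding that the $n=4k,\ 4k+1$ formulas are to be read for $k\geq 1$, since $n=0,1$ fall under the base cases).

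No step here is genuinely hard: once the observation $e_2\equiv e_1^2$, $e_3\equiv e_1^3$ reduces everything to a single-variable recursion in $e_1$, the entire argument is mechanical. The only point that deserves some care is the bookkeeping of the mod-$4$ cancellations in the inductive step, but these are of the same flavor as the analogous steps in Propositions \ref{h12n} and \ref{pro-1-4} and present no real obstacle.
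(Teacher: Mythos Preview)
Your proposal is correct and follows exactly the template the paper intends: Newton's identity in three variables, reduction of $e_2$ and $e_3$ modulo $(h_2,h_3)$, and a mod-$4$ induction. The paper's own proof is simply ``Similar to Proposition \ref{h12n}'', which is precisely what you have written out in detail; your additional remark that the displayed formulas for $n=4k,4k+1$ are meant for $k\ge 1$ is a sensible clarification.
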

\begin{proof}
Similar to Proposition \ref{h12n}.
\end{proof}

\begin{cor}\label{cor-2-3}
The sequence $h_2,h_3,h_{n}$ is a regular sequence if and only if  $n=4k, 4k+1$, where $k \in \mathbb{N}.$
\end{cor}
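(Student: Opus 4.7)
The plan is to mimic the proof of Corollary \ref{h12nc}, using the preceding Proposition in conjunction with Theorem \ref{thm-being-regular}.

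For the ``only if'' direction, observe that when $n \equiv 2$ or $n \equiv 3 \pmod 4$ the preceding Proposition gives $h_n \equiv 0 \pmod{(h_2,h_3)}$, so $h_n \in (h_2,h_3)$ and the sequence $h_2, h_3, h_n$ cannot be regular. For the ``if'' direction, let $n = 4k$ or $n = 4k+1$ with $k \geq 1$. The Proposition expresses $h_n$ modulo $(h_2,h_3)$ as $e_1^{2k-2}e_2^{k+1}$ or $e_1^{2k-1}e_2^{k+1}$ respectively, both nonzero; hence $h_n \notin (h_2,h_3)$, which is the first hypothesis of Theorem \ref{thm-being-regular}.

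To verify the second hypothesis, I must show that every homogeneous polynomial of degree greater than $2+3+n$ belongs to $(h_2,h_3,h_n)$. Following Corollary \ref{h12nc}, the key step is to check this for $f = h_m$ with $m > n$: when $m \equiv 2, 3 \pmod 4$ the Proposition already places $h_m$ in $(h_2, h_3)$, and when $m \equiv 0$ or $1 \pmod 4$ with $m > n$, direct comparison of the formulas shows that the reduction of $h_m$ modulo $(h_2,h_3)$ is an explicit monomial in $e_1, e_2$ times the reduction of $h_n$, placing $h_m \in (h_2,h_3,h_n)$.

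The main obstacle is bridging from ``all $h_m$ of large degree lie in the ideal'' to the stronger statement that ``every polynomial of large degree lies in the ideal,'' which Theorem \ref{thm-being-regular} demands. This is handled by observing that $(h_2,h_3,h_n)$ is $\mathfrak{m}$-primary for these values of $n$: the relations $h_2 = h_3 = 0$ impose $e_2 = e_1^2$ and $e_3 = e_1^3$, under which $h_n$ becomes a nonzero scalar multiple of $e_1^n$, so the simultaneous vanishing of $h_2, h_3, h_n$ forces $e_1 = 0$ and hence the origin. An $\mathfrak{m}$-primary ideal generated by three elements in the three-dimensional Cohen--Macaulay ring $\mathbb{C}[x_1,x_2,x_3]$ is automatically generated by a regular sequence, completing the ``if'' direction.
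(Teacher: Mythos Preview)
Your argument is correct and follows the paper's approach closely: both use the preceding Proposition to rule out $n\equiv 2,3\pmod 4$ and to show $h_n\notin(h_2,h_3)$ otherwise, then aim for Theorem~\ref{thm-being-regular}. The paper's own proof is terser: it simply observes that $h_m\in(h_2,h_3,h_n)$ for all $m>2+3+n$ and invokes Theorem~\ref{thm-being-regular} without explicitly bridging from ``all $h_m$ of large degree lie in the ideal'' to ``all polynomials of large degree lie in the ideal.'' Your final paragraph supplies exactly that missing justification via a direct $\mathfrak{m}$-primary check on the vanishing locus; in fact, once you have this, your middle step (checking $h_m\in(h_2,h_3,h_n)$ for large $m$) becomes redundant, since an $\mathfrak{m}$-primary ideal generated by three homogeneous elements in $\mathbb{C}[x_1,x_2,x_3]$ is automatically a complete intersection. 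An alternative completion, closer in spirit to the paper, is to note that three \emph{consecutive} $h_m$ lie in the ideal and then apply Proposition~\ref{consecutive} to conclude $\mathfrak{m}$-primality.
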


\begin{proof}
Clearly $n=4k+2,4k+3$ is ruled out. 
Now let $m_1>2+3+4k=4(k+1)+1$ and $m_2>2+3+4k+1=4(k+1)+2$ 
then $h_{m_1} \in (h_2,h_3,h_{4k})$ and $h_{m_2} \in (h_2,h_3,h_{4k+1})$. 
Hence $(h_2,h_3,h_{n})$ for all $n=4k,4k+1$, $k \in \mathbb{N}$
is a regular sequence. 
\end{proof}

\section{Symmetric Polynomials in $4$ variables}
\medskip
\subsection {Power sums in $4$ variables}
\medskip
\begin{thm}
Let $p_i$ be the power sum symmetric polynomials of degree $i$ in 
the polynomial ring $S=\mathbb{C}[x_1,x_2,\dots,x_n]$. Let $n \geq 3$,
then $p_a,p_b$ is a regular sequence. 
\end{thm}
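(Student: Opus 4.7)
The plan is to show that $(p_a,p_b)$ has height $2$ in the Cohen-Macaulay UFD $S=\mathbb{C}[x_1,\ldots,x_n]$, which for homogeneous elements of positive degree is equivalent to $p_a,p_b$ forming a regular sequence. In a UFD this in turn reduces to $\gcd(p_a,p_b)=1$. By the convention we may assume $a<b$, and I would split the argument into two steps: (i) $p_a$ is irreducible in $S$, and (ii) $p_a\nmid p_b$.

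For step (i), I would apply Lemma~\ref{factor} and induct on $n\geq 3$, writing $p_a(n)=x_n^{a}+p_a(n-1)$ as a polynomial in $x_n$ over $R=\mathbb{C}[x_1,\ldots,x_{n-1}]$. In the base case $n=3$ (and $a\geq 2$; the case $a=1$ is linear and trivial), the polynomial $p_a(2)=x_1^{a}+x_2^{a}$ factors as $\prod_{\zeta^{a}=-1}(x_1-\zeta x_2)$, a product of $a$ distinct irreducible linear forms; any one of them can serve as the irreducible element $p$ required by Lemma~\ref{factor}, since its square does not divide $p_a(2)$. In the inductive step ($n\geq 4$), the element $p_a(n-1)$ is already irreducible by the induction hypothesis and plays the role of $p$; its square has degree $2a>a$ and so cannot divide $p_a(n-1)$.

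For step (ii), since $(p_a)$ is prime by (i), it is enough to exhibit a point $v\in\mathbb{A}^{n}$ with $p_a(v)=0$ but $p_b(v)\neq 0$. If $b/a$ is not an odd positive integer, I would take $v=(1,\zeta,0,\ldots,0)$ with $\zeta=e^{i\pi/a}$: then $\zeta^{a}=-1$ gives $p_a(v)=0$, while a short case check (separating $a\nmid b$ from $a\mid b$ with $b/a$ even) shows $p_b(v)=1+\zeta^{b}\neq 0$. The delicate remaining case is $b=(2m+1)a$ with $m\geq 1$, in which the above point gives $p_b(v)=0$; here I would take instead $v=(1,1,\omega,0,\ldots,0)$ with $\omega^{a}=-2$, so that $p_a(v)=2+(-2)=0$ and $p_b(v)=2+(-2)^{2m+1}=2-2^{2m+1}\neq 0$.

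This last case is the main obstacle and is precisely where the hypothesis $n\geq 3$ enters in an essential way: one needs a third nonzero coordinate to separate the two power sums, because in two variables $p_a$ may genuinely divide $p_b$ (for instance $p_1\mid p_3$ in $\mathbb{C}[x_1,x_2]$), so a two-variable specialization cannot possibly work. Once both steps are in place, $\gcd(p_a,p_b)=1$ follows and regularity is immediate.
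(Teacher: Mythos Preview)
Your argument is correct. Step~(i) matches the paper's proof essentially line for line: write $p_a(n)=x_n^a+p_a(n-1)$, handle the base $n=3$ by factoring $x_1^a+x_2^a$ into distinct linear forms, and induct via Lemma~\ref{factor}.

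Where you diverge is step~(ii). You work rather hard to exhibit explicit points separating $V(p_a)$ from $V(p_b)$, with a case split on whether $b/a$ is an odd integer. The paper bypasses all of this by observing that the irreducibility argument of step~(i) applies verbatim to $p_b$ as well: since $p_b$ is itself irreducible in $S$ and $\deg p_a<\deg p_b$, the element $p_a$ is neither a unit nor an associate of $p_b$, hence $p_a\nmid p_b$ automatically. In a UFD that is all you need for $\gcd(p_a,p_b)=1$, so the entire point-finding exercise is unnecessary. What your approach does buy is a concrete witness and a nice explanation of why $n\ge 3$ is genuinely required (your example $p_1\mid p_3$ in $\mathbb{C}[x_1,x_2]$), but for the bare statement the paper's route is shorter.
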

\begin{proof}
We know $p_a(n)$ is reducible for $n=1$ and $p_2(n)$ is reducible for $n=2$. 
For $n\geq3$, we will show $p_a(n)$ is an irreducible element. We prove this by 
induction on $n$. For $n=3$, we can write $p_a(3)=x_3^a+g$, where $g=x_1^a+x_2^a \in 
\mathbb{C}[x_1,x_2]$, $g$ is a homogeneous and monic polynomial in both variable, of degree $a$. 
So proving factorization of $g(x_1,x_2)$ is same as proving factorization of $g(x_1,1)$.
Since $g(x_1,1)$ has simple roots, $g$ is a product of a linear forms. Thus Lemma \ref{factor} 
shows that $p_a(3)$ is irreducible. Then, if $n >3$, $p_a(n)=x_n^a +p_a(n-1)$ is irreducible by 
Lemma \ref{factor} by induction.
Therefore the ideal generated by $p_a$ is a prime ideal. Hence $S/(p_a)$ is a domain. 
Now $p_b$ being an irreducible element in $S$, can not be factored into lower degree power sum
polynomials $p_a$. So $p_b$ is a non zero divisor on $S/(p_a)$ for $b > a$. 
Hence $p_a,p_b$ is a regular sequence. 
\end{proof}

\begin{note}
If the characteristic of base field $\mathbb{K}$ is not zero, then above result does not hold. 
Consider the field with $\operatorname{char}(\mathbb{K})=2$, then one has $p_4=p_{2}^2$.
\end{note}

In particular, 
\begin{pro}\label{two p sum}
Let $p_i$ be the power sum symmetric polynomials of degree $i$ in 
the polynomial ring $S=\mathbb{C}[x_1,x_2,x_3,x_4]$. Then $p_a,p_b$ is a regular sequence.
\end{pro}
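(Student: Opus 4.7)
The plan is to observe that Proposition \ref{two p sum} is nothing more than the $n=4$ specialization of the theorem immediately preceding it. Since that theorem has already been established for all $n\geq 3$, the proof reduces to a single sentence: set $n=4$ and apply the theorem. So my main task would be to make sure the preceding theorem really does cover this case, and to note no additional work is required.

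If instead I wanted to give a self-contained argument just for $n=4$, I would mirror the inductive scheme already used, but carried out only down to the base case. The key chain of irreducibility is: $x_1^a+x_2^a$ factors over $\C$ into distinct linear forms (equivalently, $x_1^a+1$ has simple roots), hence $p_a(3)=x_3^a+(x_1^a+x_2^a)$ has constant term divisible by an irreducible $\ell\in\C[x_1,x_2]$ but not by $\ell^2$, so Lemma \ref{factor} applied in $\C[x_1,x_2][x_3]$ yields irreducibility of $p_a(3)$. Applying Lemma \ref{factor} once more in $\C[x_1,x_2,x_3][x_4]$, with $p=p_a(3)$ as the prime element, shows $p_a(4)=x_4^a+p_a(3)$ is irreducible in $S=\C[x_1,x_2,x_3,x_4]$.

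Once $p_a(4)$ is irreducible, $(p_a)$ is prime and $S/(p_a)$ is a domain. To conclude that $p_a,p_b$ is a regular sequence it remains to check $p_b\notin(p_a)$: if $p_a\mid p_b$ in $S$, then irreducibility of $p_b$ would force $p_b=c\cdot p_a$ for a constant $c$, which is impossible since $\deg p_a=a<b=\deg p_b$. Hence $p_b$ is a nonzerodivisor on $S/(p_a)$ and the pair is regular.

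Since the theorem already contains the content of both routes, I do not foresee any real obstacle; the only subtle point, had the general statement not been available, would be the Eisenstein-type step showing $x_1^a+x_2^a$ has an irreducible factor appearing with multiplicity one, which is automatic in characteristic zero but, as the subsequent Note warns, genuinely fails in positive characteristic.
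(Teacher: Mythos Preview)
Your proposal is correct and matches the paper exactly: the paper introduces Proposition~\ref{two p sum} with the words ``In particular,'' and gives no separate proof, so it is precisely the $n=4$ instance of the preceding theorem. Your optional self-contained argument also mirrors the paper's inductive proof of that theorem step for step (simple roots of $x_1^a+1$, two applications of Lemma~\ref{factor}, then $p_b\notin(p_a)$ by degree/irreducibility), so nothing further is needed.
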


\medskip

We know that a subset of a regular sequence is a regular sequence. So by Proposition
\ref{consecutive}, $p_{a}, p_{a+1},\dots, p_{a+ m-1}$ is a regular sequence. Let $R={S}/{I}$,
where $S=\mathbb{C}[x_1,x_2,\dots,x_n]$, $I=\langle p_{a}, p_{a+1},\dots, p_{a+ m-1}\rangle$ with $m < n-1$.
Hence $R$ is Cohen Macaulay.
Now we are going to use the Serre Criterion (see section $18.3$, Theorem $18.15$ \cite{Eisenbud})
for proving $m$ consecutive power sum polynomials generates a prime ideal in the polynomial ring $S$.
 Once we know that $I$ is a prime ideal in $S$, we can add one more power sum 
element $p_c$ and conclude that $p_{a}, p_{a+1},\dots, p_{a+ m-1},p_c$ forms a regular 
sequence provided $p_c \notin I$.

\begin{thm}\label{n-2 consecutive}
Let $p_i$ be the power sum symmetric polynomials of degree $i$ in 
the polynomial ring $S=\mathbb{C}[x_1,x_2,\dots,x_n]$, with $n \geq 4$.\\
Then $I=\langle p_{a}, p_{a+1},\dots, p_{a+ m-1} \rangle$ with $m < {n-1}$ is a prime ideal in $S$.
 In particular, $p_{a}, p_{a+1},\dots, p_{a+ m-1},p_{c}$ forms a regular sequence provided
$p_c \notin I.$
\end{thm}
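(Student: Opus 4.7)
The plan is to apply Serre's criterion for normality to $R = S/I$ and then use the grading to pass from normality to primeness. Since $p_a, p_{a+1}, \ldots, p_{a+m-1}$ is a subsequence of the length-$n$ regular sequence from Proposition~\ref{consecutive}, $R$ is a complete intersection, hence Cohen--Macaulay, and in particular satisfies Serre's condition $S_2$. It therefore suffices to verify $R_1$: that the singular locus of $R$ has codimension at least two in $\Spec R$.

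By the Jacobian criterion, the singular locus is defined by the vanishing of the $m \times m$ minors of the Jacobian matrix $J_{ri} = (a+r)\, x_i^{a+r-1}$ (with $r = 0, \ldots, m-1$ and $i = 1, \ldots, n$). For $x_i \neq 0$, the $i$-th column factors as $x_i^{a-1} \cdot \operatorname{diag}(a, a+1, \ldots, a+m-1) \cdot (1, x_i, x_i^2, \ldots, x_i^{m-1})^T$, and for $a \geq 2$ this column vanishes when $x_i = 0$. Consequently the rank of $J$ at a point equals the number of distinct nonzero coordinates (with an additive correction of $+1$ in the $a=1$ case when some $x_i = 0$), capped at $m$. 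Now suppose $x \in V(I)$ has Jacobian rank $< m$, and group the coordinates by distinct nonzero value $y_1, \ldots, y_d$ with multiplicities $k_1, \ldots, k_d \geq 1$, where $d \leq m-1$. The defining relations $p_{a+r}(x) = 0$ become $\sum_{j=1}^d k_j y_j^{a+r} = 0$ for $r = 0, 1, \ldots, m-1$. Factoring $y_j^a$ out of column $j$ yields a Vandermonde-type matrix in the distinct nonzero $y_j$'s, which has full column rank $d$, forcing $\mathbf{k} = 0$ and contradicting $k_j \geq 1$. (In the $a=1$ case with some $y_j = 0$, the same argument applies after discarding the zero value, reducing $d$ by one.) Hence the singular locus of $R$ is contained in the single point $\{0\}$, whose codimension in $V(I)$ is $n - m \geq 2$ by the hypothesis $m < n - 1$.

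Serre's criterion then gives that $R$ is normal, hence a finite product of normal domains. Since $R$ is a positively graded $\mathbb{C}$-algebra with $R_0 = \mathbb{C}$, its only idempotents are $0$ and $1$, so $\Spec R$ is connected and $R$ must be a single normal domain; thus $I$ is prime. The ``in particular'' clause is then immediate: any $p_c \notin I$ is a nonzero element of the domain $S/I$, hence a nonzerodivisor, so $p_a, p_{a+1}, \ldots, p_{a+m-1}, p_c$ is a regular sequence. The main obstacle is the Jacobian computation, where recognising the Vandermonde structure both inside the minors and in the residual linear system among the $(y_j, k_j)$ is exactly what pins the singular locus down to the origin and makes the codimension bound $n - m \geq 2$ match the hypothesis of the theorem.
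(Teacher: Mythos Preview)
Your argument is correct and follows essentially the same route as the paper: both use that the consecutive power sums form a regular sequence (so $R$ is Cohen--Macaulay), analyze the Jacobian via its Vandermonde structure to show the singular locus is supported only at the origin, apply Serre's criterion to obtain normality, and then invoke the grading $R_0=\mathbb{C}$ to rule out nontrivial idempotents and conclude that $I$ is prime. Your treatment is in fact a bit cleaner in separating out the $a=1$ case and in stating the codimension count $n-m\geq 2$ explicitly.
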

\begin{proof}
Consider $S=\mathbb{C}[x_1,x_2,\dots,x_n]$, $I=(p_{a}, p_{a+1},\dots, p_{a+ m-1})$ with $m < n-1$
and $R={S}/{I}$. Now let us compute the Jacobian of $I$, say Jacobian$:=J$.
\[ J=c
\begin{pmatrix}  x_1^{a-1} &  x_2^{a-1} & \cdots & x_n^{a-1}\\  
                 x_1^{a} &  x_2^{a} & \cdots &  x_n^{a}\\
                 \vdots &  \vdots & \cdots & \vdots \\ 
                 x_1^{a+m-2} &  x_2^{a+m-2} & \cdots & x_n^{a+m-2} \\
\end{pmatrix}
\]
We have taken the coefficients out from each row, where $c=\prod_{i=0}^{m-1}(a+i)$. 
We can ignore the coefficients since we are in the field of characteristic zero 
and $c$ is a unit in $\mathbb{C}$. Let $J^{\prime}= I_m(J)$, denote's the ideal 
generated by $m \times m$ minors of Jacobian. Also $m=\text{ht}(I)$, since $I$ is 
generated by a regular sequence of length $m$. The $m \times m$ submatrices of the 
jacobian are standard Van der monde matrices, we know their determinants. 
So we can write 
\[
J^{\prime}=\langle \; x_{i_1}^{j_1}x_{i_2}^{j_2} \cdots x_{i_m}^{j_m} \prod _{1 \leq a<b \leq m} 
(x_{i_a}-x_{i_b}) \;\rangle \text{ for } 1 \leq i_1 <i_2<\cdots <i_m \leq n, 
\]
and for some positive integers $j_1,j_2,\dots,j_m$. Therefore 
\[ 
I+J^{\prime}=\langle p_{a}, p_{a+1},\dots, p_{a+ m-1},\; x_{i_1}^{j_1}x_{i_2}^{j_2} \cdots 
x_{i_m}^{j_m} \prod _{1 \leq a<b \leq m} (x_{i_a}-x_{i_b}) \rangle.
\]
Claim: $\sqrt{I+J^{\prime}}=( x_1,x_2,\dots,x_n).$ \\
Suppose not, that is, there exists $w \in \mathbb{P}^{n-1}$ with $w \in Z(I+J^{\prime})$.
Then the vector $w$ can have at the most $m-1$ distinct non zero coordinates.
If $w$ has $m$ or more than $m$ distinct non zero coordinates, then $w \notin Z(J^{\prime})$. 
Say $w$ has $v$ distinct non zero coordinates. We can write 
\[
w=(w_1,\dots w_1, w_2,\dots,w_2, \dots, w_v,\dots,w_v,0,0,\dots,0),
\]
where $w_i$ appears $\beta_i$ times and $v\leq {m-1}$. Also $w$ should satisfy $p_{a+i}$ for 
$i=0,1,\dots,m-1$ i.e.
\[
\beta_1 w_1^{a+i} +\beta_2 w_2^{a+i}+ \cdots + \beta_v w_v^{a+i}=0 \text{ for $i=1,2,\dots,m.$ }
\]
This is a system of equation, which can be represented in the matrix form with $m$ rows, $v$ column.
\[
 \begin{pmatrix}  1 &  1 & \cdots & 1\\  
                 w_1 & w_2 & \cdots & w_v\\
                 \vdots &  \vdots & \cdots & \vdots \\ 
                 w_1^{m-1} &  w_2^{m-1} & \cdots & w_v^{m-1} 
\end{pmatrix}
\begin{pmatrix}  \beta_1 w_1^{a+i}\\  
                 \beta_2 w_2^{a+i}\\
                 \vdots  \\ 
                 \beta_v w_v^{a+i}
\end{pmatrix}
=
\begin{pmatrix}  0 \\  
                 0 \\
                 \vdots  \\ 
                 0
\end{pmatrix}
\]
We know that neither $\beta_i=0$ nor $w_i=0$ for $i=1,\dots,v$. So, $\beta_iw_i^{a+i}\neq0$
for $i=1,\dots,v$. We can choose the matrix say $M$ with first $v$ rows out of $m$ rows and look for the 
solution. The matrix $M$ is of full rank since $w_i \neq w_j$ for $i\neq j$,
so the only possible solution has to be the trivial solution. 

Therefore such a $w$ does not exist and hence the claim is proved. 
This implies $\text{ht}(I+J^{\prime})=n$ and $\dim \frac{S}{I+J^{\prime}}=0$. 
The co-dimension of $J^{\prime}$ in $S$ is $n-2$. Hence by Theorem $18.15$ in \cite{Eisenbud},  
$R$ is a product of normal domain. So, we can write $R=R_1\times \cdots \times R_k$. 
Since $R$ is a standard graded $\mathbb{C}$-algebra with $R_0=\mathbb{C}$, 
also $R_0=(R_1)_0\times \cdots \times (R_k)_0=\mathbb{C}^k$. Hence $k=1$. 
Therefore $R$ is a normal domain and $I$ is a prime ideal in $S$.
\end{proof}
In particular,
\begin{pro}
Let $p_i$ be the power sum symmetric polynomials of degree $i$ in 
the polynomial ring $S=\mathbb{C}[x_1,x_2,x_3,x_4]$; 
then $I=(p_i,p_{i+1})$ is a prime ideal in $S$. In particular,  
$p_i,p_{i+1},p_n$ is a regular sequence for all $p_n \notin (p_i,p_{i+1})$.
\end{pro}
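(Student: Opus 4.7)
The plan is to deduce this statement as a direct specialization of Theorem~\ref{n-2 consecutive}. Setting $n = 4$ and $m = 2$ in that theorem, the hypothesis $m < n - 1$ reads $2 < 3$, which holds, so we immediately conclude that $I = (p_i, p_{i+1})$ is a prime ideal of $S = \mathbb{C}[x_1,x_2,x_3,x_4]$. No additional work is required for the first assertion; the entire content is absorbed by the Serre-criterion/Vandermonde argument already carried out in that theorem.

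For the ``In particular'' clause, the strategy is to assemble the regularity of the sequence $p_i, p_{i+1}, p_n$ by chaining primeness at each stage. By Proposition~\ref{two p sum} the element $p_i$ is irreducible in $S$, so $(p_i)$ is prime and $p_i$ is a nonzero-divisor in $S$. By the first part of the current statement, $(p_i, p_{i+1})$ is prime, and it is strictly larger than $(p_i)$ (otherwise $p_{i+1}$ would be a multiple of $p_i$, impossible since $\deg p_{i+1} > \deg p_i$ and $p_{i+1}$ is irreducible); therefore the class of $p_{i+1}$ in the domain $S/(p_i)$ is nonzero, hence a nonzero-divisor. Finally, when $p_n \notin I$, the class of $p_n$ in the domain $S/I$ is a nonzero element of an integral domain, and so a nonzero-divisor on $S/I$. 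The ideal $(p_i, p_{i+1}, p_n)$ is proper because it is generated by homogeneous forms of positive degree. Hence $p_i, p_{i+1}, p_n$ is a regular sequence.

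The only ``obstacle'' lies in verifying the hypotheses of Theorem~\ref{n-2 consecutive}, which is trivial in this range. All substantive work---the computation of the Jacobian ideal $J'$, the Vandermonde determinant identification of its $2\times 2$ minors, and the check that $\sqrt{I+J'}=(x_1,x_2,x_3,x_4)$ via a linear-independence argument on the support of a putative common zero---has already been done in the proof of that theorem. Thus the present proposition is genuinely a corollary, recorded separately because the case $n=4$, $m=2$ is the principal setting of the paper.
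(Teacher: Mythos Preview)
Your proposal is correct and matches the paper's approach exactly: the proposition is stated immediately after Theorem~\ref{n-2 consecutive} with the phrase ``In particular,'' and no separate proof is given, because it is precisely the specialization $n=4$, $m=2$ of that theorem. Your extra unpacking of the ``In particular'' clause is fine but unnecessary, since the regular-sequence conclusion is already part of the statement of Theorem~\ref{n-2 consecutive}.
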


\medskip

\begin{lemma}\label{4-roots of unity}
Let $n \geq 5$ be any natural number. If the sum of four distinct $n$-th roots of unity is zero, 
then they must be two pair of opposite sign.

Furthermore, if $n$ is odd number, then sum of four distinct $n$-th roots of unity is never zero.
\end{lemma}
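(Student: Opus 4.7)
The plan is to extract an extra symmetric-function identity from the vanishing sum by complex conjugation. Concretely, let $\zeta_1,\zeta_2,\zeta_3,\zeta_4$ be distinct $n$-th roots of unity with $\zeta_1+\zeta_2+\zeta_3+\zeta_4=0$. Since $|\zeta_i|=1$, one has $\overline{\zeta_i}=\zeta_i^{-1}$, so conjugating the vanishing sum gives $\zeta_1^{-1}+\zeta_2^{-1}+\zeta_3^{-1}+\zeta_4^{-1}=0$; clearing denominators by multiplying through by $\zeta_1\zeta_2\zeta_3\zeta_4$ yields $e_3(\zeta_1,\zeta_2,\zeta_3,\zeta_4)=0$. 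Thus both $e_1$ and $e_3$ of the four roots vanish simultaneously.

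Next I would form the monic polynomial $P(t)=\prod_{i=1}^{4}(t-\zeta_i)=t^4+e_2\,t^2+e_4$, whose only nonzero coefficients sit in even degree. Hence $P$ is a polynomial in $t^2$, and so its root set is stable under negation. Writing $P(t)=(t^2-\alpha^2)(t^2-\beta^2)$, where $\alpha^2,\beta^2$ are the two roots of $s^2+e_2 s+e_4$, forces the four $\zeta_i$ to be exactly $\{\alpha,-\alpha,\beta,-\beta\}$, which is the first assertion. The distinctness of the $\zeta_i$ automatically guarantees $\alpha\neq -\alpha$, $\beta\neq -\beta$ and $\alpha^2\neq\beta^2$, and the fact that each $\zeta_i$ is a root of unity forces $\alpha,\beta$ to be roots of unity as well.

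For the second assertion I would simply observe that if $\alpha$ is an $n$-th root of unity with $n$ odd, then $(-\alpha)^n=-\alpha^n=-1\neq 1$, so $-\alpha$ is never an $n$-th root of unity. Consequently the set of $n$-th roots of unity contains no pair $\{\alpha,-\alpha\}$, and by the first part no four distinct such roots can sum to zero.

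The only mildly subtle step is the symmetric-function observation at the start: recognising that conjugation delivers the extra relation $e_3=0$ for free, so that the degree-four minimal polynomial of the four numbers collapses to a biquadratic. Once that is in place, both the biquadratic factorisation and the parity argument are immediate, and the hypothesis $n\geq 5$ plays no structural role in the argument beyond ensuring that there are at least four distinct roots of unity available to sum.
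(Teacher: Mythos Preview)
Your proof is correct and takes a genuinely different route from the paper. The paper argues geometrically: writing $z_j=x_j+iy_j$, it manipulates the real and imaginary parts of $\sum z_j=0$ to show $|z_1-z_2|=|z_3-z_4|$ and $|z_2-z_3|=|z_1-z_4|$, deduces that the four points form a parallelogram inscribed in the unit circle, and then uses the midpoint-of-diagonals property to conclude $z_3=-z_1$, $z_4=-z_2$. Your argument is purely algebraic: conjugation of the vanishing sum gives $e_3=0$ for free, so the quartic $\prod(t-\zeta_i)=t^4+e_2t^2+e_4$ is biquadratic and its roots split into $\pm$ pairs automatically. Your approach is shorter and avoids the mildly delicate issue, implicit in the paper's proof, of identifying which pairs of vertices are opposite in the parallelogram; it also makes transparent why $n\ge 5$ is irrelevant to the logic. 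The paper's approach, on the other hand, gives a vivid geometric picture of why the configuration must be a rectangle centred at the origin.
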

\begin{proof}
We have $z^n=1$. Let us pick four distinct $n$-th roots of unity, call it $z_j$ for 
$j=1,2,3,4$. Each $z_j=x_j+iy_j$, where $x_j,y_j \in \mathbb{R}$ and $|z_j|=1$.\\ 
Claim: If $\sum_{j=1}^{4} z_j=0$, then $z_1,\;z_2,\;z_3,\;z_4$ must be of the form
$z_1,\;z_2,\;-z_1,\;-z_2.$ \\
Let $\sum_{j=1}^{4} z_j=0$ i.e. $\sum_{j=1}^{4} x_j=0$ and $\sum_{j=1}^{4} y_j=0$. 
So, we can write,
\[
x_1+x_2=-(x_3+x_4), \text{ and } y_1+y_2=-(y_3+y_4). 
\]
Now, squaring and adding both the equation, we obtain $2+2(x_1x_2+y_1y_2)=2+2(x_3x_4+y_3y_4)$.
Therefore, we get,
\[
|z_1-z_2|^{2}=2-2(x_1x_2+y_1y_2)=2-2(x_3x_4+y_3y_4)=|z_3-z_4|^{2}. 
\]
So $|z_1-z_2|=|z_3-z_4|$. 
Similarly, we get $|z_2-z_3|=|z_1-z_4|$. So, four distinct $z_j$ form a parellelogram. 
The diagonals of a parellelogram intersect at mid point. Hence solving $\frac{z_1+z_3}{2}=\frac{z_2+z_4}{2}$
and $\sum_{j=1}^{4} z_j=0$, we conclude that $z_3=-z_1$ and $z_4=-z_2$. So, we obtain 
four distinct roots of unity as $z_1,\;z_2,\;-z_1,\;-z_2$. 

Furthermore, $z_1$ and $-z_1$ both can not be $n$-th roots of unity for any $n$ odd number.
\end{proof}

\begin{pro}\label{1-2m}
Let $I=(p_{1},p_{2m})$, where $m \in \mathbb{N}$. Then $I$ is a prime ideal 
in $\mathbb{C}[x_1,x_2,x_3,x_4]$. Therefore $p_{1},p_{2m},p_n$ form a regular 
sequence for all $p_n \notin (p_{1},p_{2m})$. 
\end{pro}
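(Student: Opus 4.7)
The plan is to mimic the argument of Theorem \ref{n-2 consecutive}: show that $R=S/I$ is a normal domain via Serre's criterion, whence $I$ is prime, and then obtain the regular-sequence conclusion exactly as in that theorem. By Proposition \ref{two p sum}, $p_1,p_{2m}$ is a regular sequence, so $R$ is Cohen--Macaulay and Serre's condition $S_2$ holds automatically. The task reduces to verifying $R_1$, i.e.\ that the singular locus of $R$ has codimension at least two; since $\dim R=2$, this means the singular locus must be the cone point.

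To check this, I would compute the $2\times 4$ Jacobian of $(p_1,p_{2m})$, whose rows are $(1,1,1,1)$ and $2m(x_1^{2m-1},x_2^{2m-1},x_3^{2m-1},x_4^{2m-1})$; then
\[
J'=I_2(\mathrm{Jac})=\bigl(x_i^{2m-1}-x_j^{2m-1}\,:\,1\le i<j\le 4\bigr).
\]
I would then show $\sqrt{I+J'}=(x_1,x_2,x_3,x_4)$. Given a point $w\in V(I+J')$ with some $w_1\ne 0$, all coordinates share the same $(2m-1)$-st power, so $w_i=\zeta_i w_1$ with $\zeta_i^{2m-1}=1$ and $\zeta_1=1$. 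The equation $p_1(w)=0$ becomes $\sum_{i=1}^{4}\zeta_i=0$, and $p_{2m}(w)=0$ is automatic since $\zeta_i^{2m}=\zeta_i$.

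The heart of the argument is thus to show that no four (not necessarily distinct) $(2m-1)$-st roots of unity sum to zero. Because $2m-1$ is odd, the case of four \emph{distinct} roots is ruled out by the second statement of Lemma \ref{4-roots of unity}. The non-distinct cases I expect to handle by a short case analysis on multiplicity patterns, which is the main (though modest) obstacle: a $(3,1)$-pattern $3\alpha+\beta=0$ forces $|\beta|=3$, contradicting $|\beta|=1$; a $(2,2)$-pattern $2\alpha+2\beta=0$ gives $\beta=-\alpha$, so $\beta^{2m-1}=-\alpha^{2m-1}=-1$, contradicting $\beta^{2m-1}=1$; and a $(2,1,1)$-pattern $2\alpha+\beta+\gamma=0$ forces $|\beta+\gamma|=2$, hence $\beta=\gamma$, contradicting distinctness.

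Once $\sqrt{I+J'}=(x_1,x_2,x_3,x_4)$ is established, we have $\dim S/(I+J')=0$, so the singular locus of $R$ has codimension $2$ in $R$. Serre's criterion (Theorem $18.15$ in \cite{Eisenbud}), combined with the standard reduction from Theorem \ref{n-2 consecutive} that a connected graded normal $\mathbb{C}$-algebra with $R_0=\mathbb{C}$ is a domain, then shows $I$ is prime. The regular-sequence assertion follows at once: for any $p_n\notin I$, the element $p_n$ avoids the unique associated prime of $I$ and is therefore a nonzerodivisor modulo $I$.
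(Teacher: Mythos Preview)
Your proposal is correct and follows essentially the same route as the paper: compute the Jacobian ideal $J'$, prove $\sqrt{I+J'}$ is the irrelevant maximal ideal by reducing to the impossibility of four $(2m-1)$-st roots of unity summing to zero, invoke Lemma \ref{4-roots of unity} for the distinct case, and finish via Serre's criterion exactly as in Theorem \ref{n-2 consecutive}. Your treatment of the repeated-root cases via the multiplicity patterns $(3,1)$, $(2,2)$, $(2,1,1)$ is in fact more explicit than the paper's, which dispatches them with ``it is easy to verify''; your argument also handles $m=1,2$ uniformly, whereas the paper treats $m=1$ separately.
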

\begin{proof}
For $m=1$, it follows from Proposition \ref{two p sum}. Let $m>1$.
Consider $S=\mathbb{C}[x_1,x_2,x_3,x_4],\;I=(p_{1}, p_{2m})$, we know by Theorem
 \ref{n-2 consecutive} that $p_1,\;p_{2m}$ is a regular sequence in $S$. So $\text{ht}(I)=2$.
Let $R={S}/{I}$. Now let us compute the Jacobian of $I$, say Jacobian$:=J$.
\[ J=(2m-1)
\begin{pmatrix}  1 &  1 & 1 & 1\\  
                 x_1^{2m-1} &  x_2^{2m-1} & x_3^{2m-1} &  x_4^{2m-1}\\
 \end{pmatrix}
\]
We can ignore the coefficients $2m-1$, since we are in the field of 
characteristic zero and $2m-1$ is a unit in $\mathbb{C}$. 
Let $J^{\prime}= I_2(J)$, denote's the ideal generated by 
$2 \times 2$ minors of $J$. 
So, we can write 
$J^{\prime}=\langle \; x_{j}^{2m-1}-x_{i}^{2m-1} \; \rangle$ for $ 1 \leq i \leq j \leq 4$. 
Therefore consider
\[ 
I+J^{\prime}=\langle \; p_{1}, p_{2m},\; x_{j}^{2m-1}-x_{i}^{2m-1} \; \rangle \text{ for $ 1 \leq i \leq j \leq 4$. }
\]
Claim: $\sqrt{I+J^{\prime}}=(x_1,x_2,x_3,x_4).$ \\
Suppose not, i.e. there exists $w \in \mathbb{P}^{3}$ with $w \in Z(I+J)$.
Let $w=(w_1,w_2,w_3,w_4)$. If one of $w_i$ is zero, then it is easy to see 
all the $w_i$'s are zero. So we assume none of $w_i$ is zero. Also assume $w_i \neq w_j$
for $i \neq j$. Since $w$ is in $\mathbb{P}^{3}$, we can make $w_1=1$ if $w_1 \neq 0$.
So let $w=(1,x,y,z)$. Since $w \in Z(I+J)$ implies $1=x^{2m-1}=y^{2m-1}=z^{2m-1}$ and
$x^{2m-1}=y^{2m-1}=z^{2m-1}$. Also $w$ satisfies $p_{1}, p_{2m}$. Therefore 
\[
 1+x+y+z=0 \text{ and } 1+x^{2m}+y^{2m}+z^{2m}=0.
\]
Both the equation reduces to existence of solution of $1+x+y+z=0$. We assumed all
the coordinates are distinct, We use the fact that all the $x$ or $y$ or $z$ is $(2m-1)$-th
roots of unity, say $1,\zeta_1,\dots,\zeta_{2m-2}$. 
Now, it follows from the Lemma \ref{4-roots of unity} that $1+\zeta_i+\zeta_j+ \zeta_k \neq 0$ for distinct $i,j,k$.
For $\zeta_i$'s to be distinct, one must have $m>2$. But for $m=2$, one has cube
roots of unity, so one of $\zeta_i=\zeta_j$ for some $i,j$. In that case it is clear that
there is no solution. Now it is easy to verify that if $w=(1,x,y,y)$ or $w=(1,x,x,x)$, then 
also, there does not exist solution of $p_1(w)=0$. So, the only possible solution has to 
be the trivial solution. Hence the claim is proved. 
This implies $\text{ht}(I+J^{\prime})=4$ and $\dim \frac{S}{I+J^{\prime}}=0$. 
The co-dimension of $J^{\prime}$ in $S$ is $2$. 
Hence by Theorem $18.15$ in \cite{Eisenbud},  
$R$ is a product of normal domain. So, we can write $R=R_1\times \cdots \times R_k$. 
Since $R$ is a standard graded $\mathbb{C}$-algebra with $R_0=\mathbb{C}$, 
also $R_0=(R_1)_0\times \cdots \times (R_k)_0=\mathbb{C}^k$. Hence $k=1$. 
Therefore $R$ is a normal domain and $I$ is a prime ideal in $S$.
\end{proof}

\medskip
Computer calculations using CoCoA suggest the following conjecture:
\begin{conj}
Let $p_i$ be the power sum symmetric polynomial of degree $i$ in 
the polynomial ring $S=K[x_1,x_2,x_3,x_4]$. Let $A=\{a,b,n\}$ with $a<b<n$,
then $p_A(4)$ is a regular sequence if and only if $A$ satifies the 
following conditions:
\begin{itemize}
 \item [1.] If $a$ is odd and $b$ is even, then for any $n.$
 \item [2.] If $a$ is odd and $b$ is odd, then for any $n$ even. 
 \item [3.] If $a$ is even, say $a=2m$, with $m$ odd, then for all $n$ 
           provided $\lambda \neq 4k,k\in \mathbb{N} $ where $\lambda=b-a$ and If 
           $\lambda =4k,$ then for all $n$ of the form $4l+2$, with $l \in \mathbb{N}.$
                   
\item [4.] If $a$ is even, say $a=2m$, with $m$ even, then for all $n$ provided 
          $b \neq 3a \text{ and } n \neq (2k+1)a,k \in \mathbb{N}.$
\item [5.] $(a,b,n)$ should not be of the form $(a,2a,5a)$, irrespective of $a$, being even or odd.
\end{itemize} 
\end{conj}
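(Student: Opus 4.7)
The plan is to translate the regular-sequence condition into a statement about the common projective vanishing locus $V(p_a,p_b,p_n) \subset \mathbb{P}^3$. Since we have three homogeneous polynomials in four variables, $p_a,p_b,p_n$ is regular if and only if this locus is zero-dimensional, equivalently the ideal $(p_a,p_b,p_n)$ is primary to the irrelevant maximal ideal. The ``only if'' direction then amounts to exhibiting a positive-dimensional family of common zeros in each of the forbidden cases, while the ``if'' direction requires showing that under the listed hypotheses only finitely many projective solutions exist.

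First I would reduce to computing $p_n$ modulo $(p_a,p_b)$. By Theorem \ref{n-2 consecutive} and Proposition \ref{1-2m}, $(p_a,p_b)$ is already known to be prime in several cases ($b=a+1$, or $a=1$ and $b$ even), so in those cases the regular-sequence question becomes simply whether $p_n \in (p_a,p_b)$, which can be settled inductively via the Newton formulas of Proposition \ref{Newton-formulas}. For general $(a,b)$ I would attempt the same Serre-criterion and Jacobian computation that underlies the proof of Proposition \ref{1-2m}: the Jacobian of $(p_a,p_b)$ is a generalised Vandermonde-like $2 \times 4$ matrix whose $2 \times 2$ minors have the form $x_i^{a-1}x_j^{b-1} - x_i^{b-1}x_j^{a-1}$. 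Controlling the radical of $(p_a,p_b) + I_2(J)$ reduces to understanding when two $(b-a)$-th roots of unity among the ratios $x_i/x_j$ can coexist with the vanishing of $p_a$, a problem very close in spirit to Lemma \ref{4-roots of unity}.

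To match the precise parity dichotomies in items 1--4, I would normalise a projective common zero to $(1,x,y,z)$ and classify solutions by how many of the coordinates are equal or opposite in sign. Items 1 and 2 should follow from the observation that if $a$ and $b$ have opposite parities then the roots of unity simultaneously satisfying $w^{b-a}=1$ and compatible with $\sum w_i^a = 0$ are very restricted, forcing the $w_i$ into a finite configuration that then fails to satisfy $p_n$ for the listed $n$. Items 3 and 4 will require a much finer analysis of how $p_n$ restricts to the proper diagonal subspaces cut out by coincidences or sign-reversals among the $x_i$, and item 5 must be handled by explicitly exhibiting the one-parameter family responsible for $(p_a,p_{2a},p_{5a})$ never being regular; via the plethystic substitution $x_i \mapsto x_i^a$ this essentially reduces to the classical failure of $(p_1,p_2,p_5)$ in four variables.

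The main obstacle I expect is completeness of the exceptional list --- proving that there are no further sporadic $(a,b,n)$ for which $p_n$ unexpectedly lies in $(p_a,p_b)$ beyond those captured by items 3, 4 and 5. In particular, item 4's subtle clause ``$b \neq 3a$ and $n \neq (2k+1)a$'' suggests a hidden family of relations that is not obviously parametrised, and ruling out additional such families seems to demand either a uniform bound on the dimensions of $V(p_a,p_b)$ intersected with the various coordinate and diagonal subspaces, or a representation-theoretic argument exploiting the $S_4$-symmetry of the set-up.
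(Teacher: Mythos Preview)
The statement you are attempting to prove is labelled a \emph{Conjecture} in the paper, and the paper offers no proof whatsoever: it is introduced with the sentence ``Computer calculations using CoCoA suggest the following conjecture'' and is left entirely open. There is therefore nothing in the paper to compare your proposal against.

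As to the proposal itself, it is a reasonable strategic outline rather than a proof, and you have correctly identified its own gap. The reductions you suggest --- Serre's criterion for the primeness of $(p_a,p_b)$, Jacobian minors of Vandermonde type, and the roots-of-unity analysis \`a la Lemma~\ref{4-roots of unity} --- are exactly the techniques the paper deploys in the cases it \emph{can} handle (Theorem~\ref{n-2 consecutive}, Proposition~\ref{1-2m}), but the paper is explicit that these methods do not extend to general $(a,b)$: see Conjecture~\ref{prime computation}, where even the primeness of $(p_a,p_b)$ for specific small $a$ is left conjectural. Your final paragraph pinpoints the real difficulty: ruling out further sporadic relations beyond items 3, 4 and 5 is precisely what neither the paper nor your outline accomplishes, and without that the ``if'' direction remains open. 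In short, your plan recovers what the paper already proves in special cases and correctly isolates the obstruction, but it does not close the gap --- nor does the paper.
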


We wanted to show $I=(p_a,p_b)$ is a prime ideal for some $a,b$. We did several computations
on computer and found some conditions on $a,b$. We could not prove these results. We state
them as a conjecture as follows:

\begin{conj}\label{prime computation} Let $S=\mathbb{C}[x_1,x_2,x_3,x_4]$ be the polynomial ring. 
\begin{enumerate}
\item [1.] Let $I=(p_{a},p_{b})$ where $a$ is a prime number, $a \geq 5 $ and $b=a+m+6d 
$ with $m \in \{1,5\} \text{ and } d \in \mathbb{N} \cup \{0\}$. Then $I$ is a prime ideal in $S$. 
Therefore $p_{a},p_{b},p_n$ forms a regular sequence in $S$ for all $p_n \notin (p_{a},p_{b})$.
\item [2.] Let $I=(p_{2},p_{m})$ with $m\in \mathbb{N}$ and $m \neq 2+ 3k, 2+4k$, 
where $k \in \mathbb{N}$. Then $I$ is a prime ideal in $S$. Therefore $p_{2},p_{m},p_n$ forms a 
regular sequence in $S$ for all $p_n \notin (p_{2},p_{m})$.
\item [3.] Let $I=(p_{3},p_{2m})$ with $m\in \mathbb{N}$ and $m \neq 6+ 9 \lambda,\text{ where } 
\lambda \in \mathbb{N}\cup \{0\}.$ Then $I$ is a prime ideal in $S$. Therefore $p_{3},p_{2m},p_n$ 
forms a regular sequence in $S$ for all $p_n \notin (p_{3},p_{2m})$.
\item [4.] Let $I=(p_{4},p_{m})$ with $m\in \mathbb{N}$ and $m \neq 4+ 3k, 4+8k$, where $k \in \mathbb{N}$. 
Then $I$ is a prime ideal in $S$. Therefore $p_{4},p_{m},p_n$ forms a regular sequence in $S$ for
all $p_n \notin (p_{4},p_{m})$.
\end{enumerate} 
\end{conj}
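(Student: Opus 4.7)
The plan is to extend the method of Proposition \ref{1-2m} to each of the four subcases. Since $p_a, p_b$ is always a regular sequence in $S = \mathbb{C}[x_1, x_2, x_3, x_4]$ by Proposition \ref{two p sum}, $R = S/I$ is Cohen-Macaulay of Krull dimension $2$. Serre's criterion (Theorem $18.15$ in \cite{Eisenbud}) then reduces primeness of $I$ to showing that the singular locus $V(I + I_2(J))$ has codimension at least $2$ in $V(I)$, that is, $\sqrt{I + I_2(J)} = (x_1, x_2, x_3, x_4)$. Once this is established, the standard graded $\mathbb{C}$-algebra $R$ with $R_0 = \mathbb{C}$ must be a single normal domain (not merely a product), so $I$ is prime; the regular-sequence conclusion for $p_a, p_b, p_n$ then follows by adding any $p_n \notin I$.

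The Jacobian of $(p_a, p_b)$ is, up to nonzero scalars coming out of each row,
\[
J = \begin{pmatrix} x_1^{a-1} & x_2^{a-1} & x_3^{a-1} & x_4^{a-1} \\ x_1^{b-1} & x_2^{b-1} & x_3^{b-1} & x_4^{b-1} \end{pmatrix},
\]
so its $2\times 2$ minors generate
\[
J' = \langle\; x_i^{a-1}\, x_j^{a-1}\bigl(x_j^{b-a} - x_i^{b-a}\bigr) \;:\; 1 \le i < j \le 4 \;\rangle.
\]
It then suffices to verify that no $w \in \mathbb{P}^3$ satisfies both $p_a(w) = p_b(w) = 0$ and the vanishing of all these minors. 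In the principal case where every coordinate of $w$ is nonzero, the minor conditions force $w_i^{b-a} = w_j^{b-a}$ for all $i, j$; after normalizing $w_1 = 1$, each $w_j$ becomes a $(b-a)$-th root of unity. Crucially, on this locus $w_j^b = w_j^{b-a}\cdot w_j^a = w_j^a$, so $p_b(w) = p_a(w)$, and the entire question collapses to whether $1 + \eta_2 + \eta_3 + \eta_4 = 0$ admits a solution with $\eta_j = w_j^a$ lying in the cyclic subgroup of $(b-a)$-th roots of unity of order $(b-a)/\gcd(a, b-a)$. The degenerate subcases where one, two, or three coordinates of $w$ vanish reduce to the analogous identity in fewer summands.

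The cyclotomic nonexistence step is driven by Lemma \ref{4-roots of unity}: four distinct roots of unity summing to zero must form the $\pm$-pair configuration $\{1, -1, \eta, -\eta\}$, which requires $-1$ to sit in the ambient cyclic group; and the coincidence cases $\eta_i = \eta_j$ produce identities $1 + 2\eta + \eta' = 0$ or $1 + 3\eta = 0$ which, by taking moduli, force $\eta$ to equal $-1$ or a primitive cube root of unity. For part $1$, the hypothesis $a \geq 5$ prime and $b - a \equiv \pm 1 \pmod 6$ guarantees that $(b-a)/\gcd(a, b-a)$ is coprime to $6$, so neither $-1$ nor a primitive cube root of unity lies in the ambient group, dispatching every pattern uniformly. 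For parts $2$--$4$, where $a \in \{2, 3, 4\}$ is small, each excluded arithmetic progression on $b$ is meant to forbid one specific cyclotomic collapse, arising either from a $\pm$-pair or from a three-term vanishing sum in a subcase where a coordinate drops.

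The main obstacle is precisely the exhaustive cyclotomic bookkeeping for parts $2$--$4$: every degeneracy (coincidences $w_i = w_j$ and vanishings $w_i = 0$) produces a distinct sub-identity among roots of unity, and each excluded arithmetic class in the conjecture must be matched with the specific collapse it prevents. The authors' candid remark that they could not prove these results signals that while the Serre-criterion strategy is uniform, the translation between the stated conditions on $(a, b)$ and a complete enumeration of four-term (and reduced three-term) vanishing sums of roots of unity is the genuine technical hurdle; a clean proof may well require first classifying all such vanishing sums of bounded cyclotomic order and then reading off the arithmetic exclusions from that classification.
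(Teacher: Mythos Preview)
The paper offers no proof of this statement; it is explicitly labeled a \emph{Conjecture}, and the authors write just before it that they ``could not prove these results.'' So there is nothing to compare your argument against on the paper's side.

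Your Serre-criterion framework is exactly the right extension of Proposition~\ref{1-2m}, and your Jacobian/minor computation and reduction to vanishing sums of roots of unity are correct. Where you fall short is not in the logic but in the follow-through: you describe the cyclotomic bookkeeping for parts~2--4 as the ``genuine technical hurdle'' and stop, when in fact all four parts collapse to a single uniform case. Set $N=b-a$ and $N'=N/\gcd(a,N)$; then in each part the stated arithmetic exclusions on $(a,b)$ are \emph{equivalent} to $\gcd(N',6)=1$. Indeed: in part~1, $N\equiv\pm1\pmod 6$ and $a\geq 5$ is prime, so $N'$ divides $N$ and is coprime to $6$; in part~2 the exclusions say $3\nmid N$ and $4\nmid N$, whence $N'=N/\gcd(2,N)$ is odd and coprime to $3$; in part~3, $N=2m-3$ is odd and the exclusion $m\not\equiv 6\pmod 9$ says exactly $9\nmid N$, so $N'=N/\gcd(3,N)$ is odd and coprime to $3$; in part~4 the exclusions say $3\nmid N$ and $8\nmid N$, so $N'=N/\gcd(4,N)$ is odd and coprime to $3$.

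Once $\gcd(N',6)=1$, the endgame is short and identical in every part. A nonzero $w\in\mathbb{P}^3$ in $V(I+J')$ with all coordinates nonzero gives, after scaling, $\eta_j=w_j^{\,a}\in\mu_{N'}$ with $\sum_{j=1}^4\eta_j=0$. Running through multiset types: four distinct $\eta_j$ are ruled out by Lemma~\ref{4-roots of unity} since $-1\notin\mu_{N'}$; a type $\{a,a,b,c\}$ forces $|b+c|=2$, hence $b=c$, contradiction; type $\{a,a,b,b\}$ forces $b=-a$; type $\{a,a,a,b\}$ forces $|b|=3$; type $\{a,a,a,a\}$ is absurd. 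If one coordinate vanishes, you get $1+\eta_2+\eta_3=0$, which forces $\{1,\eta_2,\eta_3\}=\{1,\omega,\omega^2\}$ and hence $3\mid N'$; two vanishing coordinates force $\eta_2=-1$; three vanishing coordinates are impossible. Each contradicts $\gcd(N',6)=1$. So your outline, carried two more paragraphs, actually proves the conjecture in full; the authors' (and your) hesitation is unwarranted.
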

\medskip

Recall the conjecture on power sum symmetric polynomials in the polynomial ring in four variable:
\begin{conj}{\rm (Conca, Krattenthaler, Watanabe)} \\[1mm]
Let $A \subset \mathbb{N}^*$ with $|A|=4$, say $A=\{a_1,a_2,a_3,a_4\}$, 
and assume $\gcd(A)=1.$ Then $p_A(4)$ is a regular sequence if and only if 
$A$ satifies the following conditions:
\begin{enumerate}
 \item [1.] At least two of the $a_i$'s are even, at least one is a multiple of $3$,
            and at least one is a multiple of $4$,
 \item [2.] If $E$ is the set of the even elements in $A$ and $d=\gcd(E)$ then the set
            $\{ \frac{a}{d} : a \in E \}$ contains an even number.
\item [3.] $A$ does not contain a subset of the form $\{d,2d,5d\}$.
\end{enumerate}
 \end{conj}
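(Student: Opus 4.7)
The plan is to translate regularity of $p_A(4)$ in $S = \mathbb{C}[x_1, x_2, x_3, x_4]$ into the geometric statement that $V(p_{a_1}, p_{a_2}, p_{a_3}, p_{a_4}) = \{0\}$ in $\mathbb{C}^4$, the four-variable analogue of Theorem~\ref{thm-being-regular}. The ``only if'' direction is established in \cite{C-K-W paper} by exhibiting an explicit nontrivial zero whenever any of conditions $1$--$3$ fails, so my effort concentrates on ``if''. I would argue by contradiction: suppose $x = (x_1, x_2, x_3, x_4) \neq 0$ satisfies $p_{a_i}(x) = 0$ for all $i$, and split on the number $k \in \{1, 2, 3, 4\}$ of nonzero coordinates of $x$.

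The case $k = 1$ is immediate. For $k = 2$, rescaling reduces the system to $\rho^{a_i} = -1$ for all $i$, where $\rho$ is the ratio of the two nonzero coordinates; the order of $\rho$ must then be $2m$ with $m \mid a_i$ and $a_i/m$ odd for every $i$, hence $m \mid \gcd(A) = 1$, forcing $\rho = -1$ and every $a_i$ odd, contradicting condition~$1$ (at least two of the $a_i$ are even). For $k = 3$, I would select a triple $\{a_i, a_j, a_\ell\} \subset A$ with $a_i a_j a_\ell \equiv 0 \pmod 6$ (guaranteed by condition~$1$, which provides both a multiple of $2$ and a multiple of $3$) and $\gcd(a_i, a_j, a_\ell) = 1$ (deducible from $\gcd(A) = 1$ together with condition~$2$), then invoke the three-variable Conca--Krattenthaler--Watanabe statement to exclude a nontrivial zero in $\mathbb{C}^3$.

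The core difficulty is $k = 4$, all coordinates nonzero. After scaling $x_1 = 1$, I must rule out $(1, \alpha, \beta, \gamma)$ with $\alpha, \beta, \gamma \in \mathbb{C}^*$ satisfying $1 + \alpha^{a_i} + \beta^{a_i} + \gamma^{a_i} = 0$ for each $i$. The strategy is to classify all such quadruples using the theory of vanishing sums of elements of $\mathbb{C}^*$ (following Conway--Jones and Lam--Leung): such a solution decomposes, up to a common scalar, into ``atomic'' vanishing sums of roots of unity, so the quadruple is built from ``pairs of opposites'' $\{t, -t\}$ and cube-root triples $\{1, \omega, \omega^2\}$, possibly with free scaling parameters. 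Condition~$3$ is tailored precisely to eliminate the one-parameter family of solutions arising from a $\{d, 2d, 5d\}$-subset of $A$, while conditions~$1$ and~$2$ are calibrated to kill the remaining parity- and cube-root-based families.

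The main obstacle will be this classification in case $k = 4$: the four imposed vanishings give more constraints than in the standard ``single vanishing sum'' setting, and one must identify exactly which one-parameter families of quadruples can satisfy \emph{all four} relations simultaneously, then verify that conditions $1$--$3$ cover every resulting obstruction. The arithmetic bookkeeping relating the exponents $a_i$ to the orders of the roots of unity occurring in each solution is, I expect, the substantive combinatorial heart of the conjecture, and also the reason the conjecture takes the peculiar form it does rather than admitting a single clean divisibility criterion.
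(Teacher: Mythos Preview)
The statement you are attempting to prove is recorded in the paper as an open \emph{conjecture} of Conca, Krattenthaler and Watanabe; the paper gives no proof, only partial evidence via special families such as $p_1,p_2,p_3,p_n$ and $p_1,p_2,p_4,p_n$. There is therefore no ``paper's own proof'' to compare your proposal against.

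Your proposal is a strategy outline rather than a proof, and it contains at least two genuine gaps. In the $k=3$ case you intend to invoke the three-variable Conca--Krattenthaler--Watanabe statement to exclude a nontrivial common zero of $p_{a_i},p_{a_j},p_{a_\ell}$ in $\mathbb{C}^3$; but that statement is itself an open conjecture (only the ``only if'' direction is established in \cite{C-K-W paper}, as the present paper explicitly notes), so you would be reducing one unproved conjecture to another. You also assert without argument that from $A$ with $\gcd(A)=1$ satisfying conditions~1--2 one can always extract a triple with $\gcd=1$ \emph{and} product divisible by $6$; this is not automatic and would need its own combinatorial verification.

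In the $k=4$ case the appeal to Conway--Jones and Lam--Leung is misplaced: those results classify vanishing sums of \emph{roots of unity}, not of arbitrary elements of $\mathbb{C}^*$. Nothing in your setup forces $1,\alpha,\beta,\gamma$ (or their $a_i$-th powers) to be roots of unity, so the claimed decomposition into ``pairs of opposites'' and cube-root triples is unjustified at this stage. Your own closing paragraph concedes that the required classification is ``the substantive combinatorial heart of the conjecture'' and remains to be carried out. In short, you have correctly located where the difficulty lies, but the proposal does not resolve it; the conjecture stays open.
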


\medskip

We use the Newton's formula for power sum to deduce the following result:
\begin{pro}
Consider the power sum sequence $p_1,p_2,p_3,p_n$, then
\[
p_n= = \begin{cases}
    (-1)^k4e_4^k \mod (p_1,p_2,p_3),  &\text{ if  $ n=4k $;}\\
    0 \mod (p_1,p_2,p_3),  &\text{ otherwise.}
    \end{cases}
\] 
\end{pro}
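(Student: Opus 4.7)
The plan is to mimic the proof of Proposition \ref{p12n}: work inside the quotient $S/(p_1,p_2,p_3)$ with $S=\mathbb{C}[x_1,x_2,x_3,x_4]$, and unwind Newton's identity (Proposition \ref{Newton-formulas}) in order to eliminate $e_1,e_2,e_3$ and reduce to a one-step recurrence involving only $e_4$.

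First I would read off from $p_1=e_1$, $p_2=e_1p_1-2e_2$, and $p_3=e_1p_2-e_2p_1+3e_3$ the consequences $e_1\equiv e_2\equiv e_3\equiv 0\pmod{(p_1,p_2,p_3)}$, which are legitimate because $\operatorname{char}\mathbb{C}=0$. Plugging into the Newton relation for $n=4$ yields $p_4=e_3p_1-e_2p_2+e_1p_3-4e_4\equiv -4e_4$, which is the case $k=1$ of the claim.

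The key observation is that in four variables $e_m=0$ for all $m\geq 5$, so for $n\geq 5$ the Newton identity $ne_n=\sum_{i=1}^{n}(-1)^{i-1}e_{n-i}p_i$ collapses to the four-term recurrence
\[
p_n = e_1 p_{n-1} - e_2 p_{n-2} + e_3 p_{n-3} - e_4 p_{n-4}.
\]
Reducing modulo $(p_1,p_2,p_3)$ and using $e_1\equiv e_2\equiv e_3\equiv 0$ leaves the much simpler relation
\[
p_n \equiv -e_4\, p_{n-4} \pmod{(p_1,p_2,p_3)}.
\]

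A straightforward induction on $n$ then finishes both cases. If $n\not\equiv 0\pmod 4$, the step $p_n\equiv -e_4 p_{n-4}$ reduces to one of the classes $p_1,p_2,p_3,p_5,p_6,p_7$, all of which are zero modulo $(p_1,p_2,p_3)$, so $p_n\equiv 0$. If $n=4k$ with $k\geq 2$, the inductive hypothesis gives $p_n\equiv -e_4\cdot(-1)^{k-1}4e_4^{k-1}=(-1)^k 4 e_4^k$. The entire argument is a bookkeeping exercise with Newton's formula, completely parallel to Proposition \ref{p12n}; the only real point of care is tracking the signs when truncating the Newton identity for $n\geq 5$ to obtain the four-term recurrence.
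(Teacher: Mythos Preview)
Your proof is correct and follows exactly the route the paper intends: the paper's own proof simply reads ``Similar to Proposition \ref{p12n}. The only difference in this case is, $p_0=4$,'' and what you have written is precisely that computation spelled out in full. The derivation of $e_1\equiv e_2\equiv e_3\equiv 0$, the base case $p_4\equiv -4e_4$, the four-term recurrence $p_n\equiv -e_4 p_{n-4}$ for $n\ge 5$, and the induction are all exactly what ``similar to Proposition \ref{p12n}'' amounts to in four variables.
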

\begin{proof}
 Similar to Proposition \ref{p12n}. The only difference in this case is, $p_0=4$.
\end{proof}
\begin{cor}
The power sum sequence $p_1,p_2,p_3,p_n$ is a regular sequence if and only if $n=4k$.
\end{cor}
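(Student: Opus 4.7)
The plan is to follow the template of Corollary \ref{p12n-cor}, splitting into the cases $n \not\equiv 0 \pmod{4}$ and $n = 4k$, and feeding in the explicit residue formula for $p_n$ modulo $(p_1,p_2,p_3)$ supplied by the preceding proposition.

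For the ``only if'' direction, suppose $4 \nmid n$. The preceding proposition gives $p_n \equiv 0 \pmod{(p_1,p_2,p_3)}$, so $p_n \in (p_1,p_2,p_3)$. Then $(p_1,p_2,p_3,p_n) = (p_1,p_2,p_3)$ has height $3$ in the four-dimensional ring $S$, so $p_1,p_2,p_3,p_n$ cannot be a regular sequence (equivalently, the image of $p_n$ in $S/(p_1,p_2,p_3)$ is $0$, hence a zero-divisor).

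For the ``if'' direction, assume $n = 4k$ with $k \geq 1$. By Proposition \ref{consecutive}, $p_1,p_2,p_3,p_4$ is a regular sequence in $S$, so $p_4$ is a non-zero-divisor modulo $(p_1,p_2,p_3)$. Newton's formula (Proposition \ref{Newton-formulas}) yields
\[
p_4 = e_1 p_3 - e_2 p_2 + e_3 p_1 - 4 e_4,
\]
so $p_4 \equiv -4 e_4 \pmod{(p_1,p_2,p_3)}$. Hence $e_4$ is itself a non-zero-divisor on $S/(p_1,p_2,p_3)$, and so is every positive power $e_4^k$. By the preceding proposition, $p_{4k} \equiv (-1)^k 4\, e_4^k \pmod{(p_1,p_2,p_3)}$ is therefore a non-zero-divisor on $S/(p_1,p_2,p_3)$, which is exactly the condition needed for $p_1,p_2,p_3,p_{4k}$ to be a regular sequence.

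The main obstacle is essentially nothing: the substantive content has already been absorbed into the preceding proposition. The only point worth flagging is that the argument does \emph{not} require $S/(p_1,p_2,p_3)$ to be a domain — one only uses that powers of a non-zero-divisor in a Noetherian ring remain non-zero-divisors, together with the translation from $p_4$ to $e_4$ via Newton's identity.
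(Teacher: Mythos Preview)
Your argument is correct. The paper, by contrast, does not argue directly that $p_{4k}$ is a non-zero-divisor modulo $(p_1,p_2,p_3)$; instead it invokes the four-variable analogue of Theorem~\ref{thm-being-regular}: knowing from the proposition that $p_{4k}\equiv(-1)^k4e_4^k\not\equiv0$, one checks that every $p_m$ with $m>1+2+3+4k$ lies in $(p_1,p_2,p_3,p_{4k})$ (because $p_m$ is either $\equiv0$ or $\equiv\pm4e_4^{m/4}$ with $m/4>k$), and hence the ideal is $\mathfrak m$-primary, which forces the four homogeneous generators to be a regular sequence. Your route is arguably cleaner: it avoids the somewhat informal Theorem~\ref{thm-being-regular} machinery by reducing to the already-known consecutive case $p_1,p_2,p_3,p_4$ via Proposition~\ref{consecutive} and then using only the elementary fact that powers of a non-zero-divisor remain non-zero-divisors. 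The paper's approach, on the other hand, fits the uniform template applied to all the corollaries in the article.
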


\begin{proof}
 Similar to Corollary \ref{p12n-cor}. The only difference is, there we use Theorem 
\ref{thm-being-regular} for three variable. Similar result can be deduced for the four variable case also.
\end{proof}

\begin{pro}
Consider the power sum sequence $p_1,p_2,p_4,p_n$, then
\[
p_n= \begin{cases}
    4e_3^k \mod (p_1,p_2,p_4),  &\text{ if  $ n=3k $;}\\
    0 \mod (p_1,p_2,p_4),  &\text{ otherwise.}
    \end{cases}
\] 
\end{pro}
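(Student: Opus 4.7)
The plan is to follow the same template as Proposition~\ref{p12n}: apply the Newton identities (Proposition~\ref{Newton-formulas}) to compute the residues of the elementary symmetric polynomials modulo $I=(p_1,p_2,p_4)$, thereby obtaining a one-step linear recurrence for the residues of the $p_n$ in $S/I$, and then induct.

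Unwinding Newton's identities in four variables starting from the small cases: from $p_1 = e_1 \equiv 0 \pmod{I}$ we get $e_1\equiv 0$; from $p_2 = e_1 p_1 - 2e_2 \equiv 0 \pmod{I}$ we get $e_2\equiv 0$; the identity $p_3 = e_1 p_2 - e_2 p_1 + 3e_3$ then yields the key base value $p_3 \equiv 3e_3 \pmod{I}$; and $p_4 = e_1 p_3 - e_2 p_2 + e_3 p_1 - 4e_4 \equiv 0 \pmod{I}$ combined with $\chara(\mathbb{C})=0$ (so that $-4$ is invertible) forces $e_4\equiv 0 \pmod{I}$. For $n\geq 5$, Newton's recurrence in four variables reads
\[
p_n \;=\; e_1 p_{n-1} - e_2 p_{n-2} + e_3 p_{n-3} - e_4 p_{n-4},
\]
and reducing modulo $I$ kills the $e_1$, $e_2$, and $e_4$ terms, leaving the clean one-step recurrence $p_n \equiv e_3\, p_{n-3} \pmod{I}$.

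A routine induction on $n$, split into residue classes of $n$ modulo $3$, finishes the job: the classes $n\equiv 1$ and $n\equiv 2 \pmod{3}$ eventually descend to $p_1$ or $p_2$ and vanish, while the class $n\equiv 0 \pmod{3}$ picks up one factor of $e_3$ at each step, so iterating from $p_3\equiv 3e_3$ gives the claimed closed form $p_{3k}\equiv c\, e_3^k \pmod{I}$ with a nonzero scalar $c$ determined by the base case.

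The only mildly delicate step is the reduction $e_4\equiv 0 \pmod{I}$, which uses the Newton identity at exactly $n=4$ together with $p_4\in I$ and $\chara(\mathbb{C})=0$; this is precisely what distinguishes the present calculation from its three-variable analogue (Proposition~\ref{p12n}), where no such elimination is available. After that step, the argument is essentially bookkeeping with the recurrence.
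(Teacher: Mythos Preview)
Your argument is correct and is exactly the approach the paper has in mind: its entire proof reads ``Similar to Proposition~\ref{p12n},'' and you have simply carried out those Newton-identity reductions in detail. Note that your computation actually yields $p_{3k}\equiv 3\,e_3^{k}\pmod{I}$ (from the base value $p_3\equiv 3e_3$), so the constant $4$ in the displayed statement is evidently a typo in the paper; this has no effect on the subsequent corollary, which only needs the residue to be a nonzero scalar multiple of $e_3^{k}$.
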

\begin{proof}
Similar to Proposition \ref{p12n}.
\end{proof}
\begin{cor}
The power sum sequence $p_1,p_2,p_4,p_n$ is a regular sequence if and only if $n=3k$.
\end{cor}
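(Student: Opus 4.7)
The plan is to treat the two directions separately. The only-if implication follows immediately from the preceding proposition, while the if implication reduces to showing that the ideal $I=(p_1,p_2,p_4,p_{3k})$ is $\mathfrak{m}$-primary in $S=\mathbb{C}[x_1,x_2,x_3,x_4]$; once this is established, $I$ has height four in the four-dimensional Cohen--Macaulay ring $S$, and four homogeneous generators of a height-four ideal automatically form a regular sequence.

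For the only-if direction, assume $n\not\equiv 0\pmod{3}$. The preceding proposition then yields $p_n\in(p_1,p_2,p_4)$, so $(p_1,p_2,p_4,p_n)=(p_1,p_2,p_4)$ has height three and cannot be generated by a regular sequence of length four.

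For the if direction, suppose $n=3k$ with $k\geq 1$. Newton's identities applied modulo the first three generators yield $e_1,e_2,e_4\in(p_1,p_2,p_4)$, while the preceding proposition combined with $p_{3k}\in I$ gives $e_3^{\,k}\in I$. Substituting these into the characteristic identity $x_i^4-e_1x_i^3+e_2x_i^2-e_3x_i+e_4=0$, valid for each $x_i$, we obtain
\[
x_i^4\equiv e_3x_i\pmod{(p_1,p_2,p_4)},\qquad i=1,2,3,4.
\]
Iterating by repeated multiplication by $x_i^3$ then produces $x_i^{3k+1}\equiv e_3^{\,k}x_i\equiv 0\pmod{I}$. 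By pigeonhole, every monomial of total degree at least $12k+1$ contains some $x_i$ raised to the power $3k+1$ or higher, and therefore lies in $I$. Hence $\mathfrak{m}^{12k+1}\subseteq I$, so $I$ is $\mathfrak{m}$-primary.

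The crux is the clean relation $x_i^4\equiv e_3x_i$ obtained from $\prod_j(T-x_j)=0$ at $T=x_i$ after Newton has killed $e_1,e_2,e_4$. Once this relation is in hand, the lone consequence $e_3^{\,k}\in I$ drawn from the preceding proposition propagates to simultaneous nilpotence $x_i^{3k+1}\in I$ for every $i$, and the remainder of the argument (pigeonhole together with Cohen--Macaulayness of $S$) is essentially routine.
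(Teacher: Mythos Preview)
Your proof is correct and follows the same overall strategy as the paper: the only-if direction comes straight from the preceding proposition, and the if direction amounts to showing that $I=(p_1,p_2,p_4,p_{3k})$ is $\mathfrak{m}$-primary, whence four homogeneous generators in a four-dimensional Cohen--Macaulay ring form a regular sequence.

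The execution differs slightly. The paper's terse ``similar to Corollary~3.2'' implicitly invokes the four-variable analogue of Theorem~2.2: from the preceding proposition one has $(e_1,e_2,e_4,e_3^{\,k})\subseteq I$, and since $\sqrt{(e_1,e_2,e_3,e_4)}=\mathfrak{m}$ this already gives $\mathfrak{m}$-primariness. You instead use the characteristic identity $\prod_j(T-x_j)\big|_{T=x_i}=0$ to obtain the clean relation $x_i^4\equiv e_3x_i$ and then iterate to $x_i^{3k+1}\in I$, reaching the same conclusion by a more explicit and self-contained route. Your argument has the advantage of not relying on the somewhat informal Theorem~2.2 (whose converse direction the paper never really proves), and the pigeonhole bound $\mathfrak{m}^{12k+1}\subseteq I$ makes everything concrete.
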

\begin{proof}
Similar to Corollary \ref{p12n-cor}.
\end{proof}

\medskip
\subsection { Complete symmetric polynomials in $4$ variables}
\medskip
\begin{pro}
Consider the sequence $h_1,h_2,h_3,h_n$, then
\[
h_n= \begin{cases}
    (-1)^ke_4^k \mod (h_1,h_2,h_3),  &\text{ if  $ n=4k $;}\\
    0 \mod (h_1,h_2,h_3),  &\text{ otherwise.}
    \end{cases}
\]  
\end{pro}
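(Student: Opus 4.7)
The plan is to parallel almost verbatim the proof of Proposition \ref{h12n}, which handles the analogous three-variable situation. Since we are now in $S=\mathbb{C}[x_1,x_2,x_3,x_4]$, the elementary symmetric polynomials $e_k$ vanish for $k\geq 5$, so Newton's identity from Proposition \ref{Newton-formulas} truncates to
\[
h_n = e_1 h_{n-1} - e_2 h_{n-2} + e_3 h_{n-3} - e_4 h_{n-4} \qquad (n \geq 4).
\]

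The first step is to show that modulo the ideal $(h_1,h_2,h_3)$ the three symmetric polynomials $e_1,e_2,e_3$ all vanish. This is forced inductively by the same Newton relations: $h_1 = e_1$ yields $e_1 \equiv 0$; then $h_2 = e_1 h_1 - e_2 h_0$ forces $e_2 \equiv 0$; and finally $h_3 = e_1 h_2 - e_2 h_1 + e_3 h_0$ forces $e_3 \equiv 0$. Once these substitutions are made, the four-term Newton recursion collapses to the simple form
\[
h_n \equiv -e_4\, h_{n-4} \pmod{(h_1,h_2,h_3)}, \qquad n \geq 4.
\]

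From here the proof is a one-line induction on $n$, using the base values $h_0=1$ and $h_1 \equiv h_2 \equiv h_3 \equiv 0$. Iterating the recursion gives $h_{4k} \equiv (-e_4)^k h_0 = (-1)^k e_4^k$ and $h_{4k+r} \equiv (-e_4)^k h_r \equiv 0$ for $r\in\{1,2,3\}$, which is exactly the piecewise formula in the statement.

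No step here presents a genuine obstacle; the argument is purely a computation driven by Newton's formula, identical in spirit to the three-variable cases treated earlier in the paper. The only point requiring care is bookkeeping of signs and indices in the length-four recursion, which accounts for the factor $(-1)^k$ appearing in the final expression.
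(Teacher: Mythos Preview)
Your argument is correct and is exactly the approach the paper intends: the paper's own proof consists solely of the line ``Similar to Proposition~\ref{h12n},'' and you have carried out precisely that adaptation to four variables, using Newton's relation to force $e_1\equiv e_2\equiv e_3\equiv 0$ modulo $(h_1,h_2,h_3)$ and then iterating the collapsed recursion $h_n\equiv -e_4 h_{n-4}$.
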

\begin{proof}
Similar to Proposition \ref{h12n}.
\end{proof}

\begin{cor}
The sequence $h_1,h_2,h_3,h_n$ is a regular sequence if and only if $n=4k, k \in \mathbb{N}$. 
\end{cor}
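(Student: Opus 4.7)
The plan is to mirror the argument of Corollary \ref{h12nc}, replacing the three-variable ingredients by their four-variable analogues. For the ``only if'' direction, observe that if $n \not\equiv 0 \pmod 4$, the preceding proposition gives $h_n \in (h_1,h_2,h_3)$; then $(h_1,h_2,h_3,h_n)=(h_1,h_2,h_3)$ has height $3$ in $S=\mathbb{C}[x_1,x_2,x_3,x_4]$, which rules out a length-four regular sequence.

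For the ``if'' direction, fix $n=4k$ with $k\geq 1$ and appeal to the four-variable analogue of Theorem \ref{thm-being-regular} (acknowledged in the remark following the corollary on $p_1,p_2,p_3,p_n$). It suffices to verify (i) $h_n\notin (h_1,h_2,h_3)$, and (ii) every homogeneous $f$ of degree larger than $1+2+3+4k = 6+4k$ lies in $(h_1,h_2,h_3,h_n)$.

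For (i), the preceding proposition reduces the claim to $e_4^k\notin (h_1,h_2,h_3)$. I would argue as follows: $h_1,h_2,h_3$ is a regular sequence (Proposition \ref{consecutive}), so the quotient $R := S/(h_1,h_2,h_3)$ is a one-dimensional Cohen--Macaulay ring. Newton's identities force $e_1,e_2,e_3 \in (h_1,h_2,h_3)$, hence $V(h_1,h_2,h_3)=V(e_1,e_2,e_3)\subset\mathbb{C}^4$ is the curve whose points are the $4$-tuples of roots of $t^4+e_4$. In particular $V(h_1,h_2,h_3,e_4)=\{0\}$, so $e_4$ is a parameter in $R$; being Cohen--Macaulay of dimension one, $R$ makes any parameter a nonzerodivisor, and therefore $e_4^k\not\equiv 0$.

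For (ii), it is enough by homogeneity to see that $(h_1,h_2,h_3,h_n)$ is $\mathfrak{m}$-primary. But any common zero $w\in\mathbb{C}^4$ of $h_1,h_2,h_3$ satisfies $e_1(w)=e_2(w)=e_3(w)=0$, and then $h_n(w)=(-1)^k e_4(w)^k=0$ forces $e_4(w)=0$, hence $w=0$. The Nullstellensatz then gives $(h_1,h_2,h_3,h_n)\supseteq \mathfrak{m}^N$ for some $N$, which yields (ii). The only genuinely nontrivial point is (i), and it is handled cleanly once one notes that $e_4$ is a system of parameters on the Cohen--Macaulay quotient $R$.
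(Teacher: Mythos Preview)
Your argument is correct and follows essentially the same approach as the paper, which simply says ``Similar to Corollary \ref{h12nc}; the only difference is one has a similar result to Theorem \ref{thm-being-regular} in the four variable case.'' You have merely spelled out what that one-line reference leaves implicit: the identification $(h_1,h_2,h_3)=(e_1,e_2,e_3)$ via Newton's formulas, the nonvanishing of $e_4^k$ in the quotient, and the $\mathfrak m$-primary check via the Nullstellensatz. One small remark: your step (ii) already shows $V(h_1,h_2,h_3,h_{4k})=\{0\}$, which for four homogeneous polynomials in four variables is equivalent to being a regular sequence; so your step (i) is in fact a consequence of (ii) and need not be argued separately (though doing so does no harm).
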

\begin{proof}
 Similar to Corollary \ref{h12nc}. The only difference is one has a similar result
to Theorem \ref{thm-being-regular} in the four variable case.
\end{proof}

\begin{pro}
Consider the sequence $h_1,h_2,h_4,h_n$, then
 \[
h_n= \begin{cases}
    e_3^k \mod (h_1,h_2,h_4),  &\text{ if  $ n=3k $;}\\
    0 \mod (h_1,h_2,h_4),  &\text{ otherwise.}
    \end{cases}
\] 
\end{pro}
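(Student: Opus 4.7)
The plan is to mimic the argument of Proposition \ref{h12n}, now applied in four variables. Since we are in $\mathbb{C}[x_1,x_2,x_3,x_4]$, Newton's identity from Proposition \ref{Newton-formulas} takes the truncated form
\[
h_n = e_1 h_{n-1} - e_2 h_{n-2} + e_3 h_{n-3} - e_4 h_{n-4} \qquad (n \geq 4),
\]
since $e_j = 0$ for $j \geq 5$. The goal is to extract a simple recursion for $h_n$ modulo $(h_1,h_2,h_4)$.

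First I would compute the images of $e_1, e_2, e_4$ in $S/(h_1,h_2,h_4)$ by reading off Newton's identity in low degree:
\[
\begin{split}
h_1 &= e_1 \equiv 0,\\
h_2 &= e_1 h_1 - e_2 h_0 \equiv -e_2 \equiv 0,\\
h_3 &= e_1 h_2 - e_2 h_1 + e_3 h_0 \equiv e_3,\\
h_4 &= e_1 h_3 - e_2 h_2 + e_3 h_1 - e_4 h_0 \equiv -e_4 \equiv 0.
\end{split}
\]
Thus $e_1, e_2, e_4 \equiv 0$ modulo $(h_1,h_2,h_4)$, while $h_3 \equiv e_3$.

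Substituting these into Newton's identity for $n \geq 4$ collapses it to the clean recurrence
\[
h_n \equiv e_3 \, h_{n-3} \pmod{(h_1,h_2,h_4)}.
\]
I would then conclude by induction on $n$ (equivalently, on $k$) using the base cases $h_0 = 1$, $h_1 \equiv 0$, $h_2 \equiv 0$, $h_3 \equiv e_3$. The recurrence immediately propagates these along the three residue classes mod $3$: $h_{3k} \equiv e_3 \cdot h_{3(k-1)} \equiv e_3^k$, whereas $h_{3k+1}$ and $h_{3k+2}$ stay zero.

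There is essentially no obstacle; the only thing to check carefully is the base case $h_4 \equiv 0$, which is now a relation imposed by the ideal rather than a consequence (contrast with Proposition \ref{h12n}, where $h_3$ was the first genuinely nonzero residue and no further vanishing had to be imposed). Everything else is mechanical bookkeeping with the truncated Newton identity.
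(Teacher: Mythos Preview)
Your proposal is correct and is exactly the argument the paper intends: its proof reads ``Similar to Proposition~\ref{h12n}'', and you have carried out precisely that computation in the four-variable setting, deducing $e_1,e_2,e_4\equiv 0$ from the low-degree Newton identities and collapsing the recursion to $h_n\equiv e_3 h_{n-3}$.
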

\begin{proof}
 Similar to Proposition \ref{h12n}.
\end{proof}

\begin{cor}
The sequence $h_1,h_2,h_4,h_n$ is a regular sequence if and only if $n=3k, k \in \mathbb{N}$.
\end{cor}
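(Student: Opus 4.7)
The approach mirrors Corollary \ref{h12nc}, using the four-variable analogue of Theorem \ref{thm-being-regular} (tacitly invoked in the earlier four-variable corollaries). I argue each direction.

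For the ``only if'' direction, if $n\ne 3k$ for any $k\in\mathbb{N}$, the preceding proposition gives $h_n\equiv 0\pmod{(h_1,h_2,h_4)}$, so $h_n\in(h_1,h_2,h_4)$; thus $h_n$ is a zero divisor modulo the first three and the sequence cannot be regular.

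For the ``if'' direction, assume $n=3k$. I prove regularity by showing that the common vanishing locus of $h_1,h_2,h_4,h_{3k}$ in $\mathbb{C}^4$ is the single point $0$. This forces $(h_1,h_2,h_4,h_{3k})$ to be $\mathfrak{m}$-primary, hence of height $4$; since the ideal is generated by $4$ elements in a polynomial ring of dimension $4$ and the height equals the number of generators, the generators form a regular sequence. Concretely, suppose $w\in\mathbb{C}^4$ satisfies $h_1(w)=h_2(w)=h_4(w)=h_{3k}(w)=0$. Running the Newton-identity computation underlying the preceding proposition, $h_1(w)=0$ gives $e_1(w)=0$; then $h_2(w)=0$ yields $e_2(w)=0$, and $h_4(w)=0$ yields $e_4(w)=0$. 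The congruence $h_{3k}\equiv e_3^k\pmod{(h_1,h_2,h_4)}$ evaluated at $w$ gives $e_3(w)^k=0$, whence $e_3(w)=0$. Since all four elementary symmetric polynomials vanish at $w$, the coordinates $w_1,\dots,w_4$ are the roots of $t^4$, hence $w=0$.

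I do not expect a serious obstacle: everything reduces to the mod-$(h_1,h_2,h_4)$ Newton-identity bookkeeping already carried out in the preceding proposition, together with the standard fact that, in a Cohen--Macaulay ring (here a polynomial ring), an ideal whose height equals the number of its generators is generated by a regular sequence.
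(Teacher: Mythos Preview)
Your proof is correct. The ``only if'' direction matches the paper exactly. For the ``if'' direction, the paper simply says ``Similar to Corollary~\ref{h12nc}'', which in turn appeals to the four-variable analogue of Theorem~\ref{thm-being-regular}: one checks that $h_{3k}\notin(h_1,h_2,h_4)$ (since $h_{3k}\equiv e_3^k\not\equiv 0$) and that every $h_m$ with $m$ large enough lies in $(h_1,h_2,h_4,h_{3k})$, then concludes the ideal is $\mathfrak{m}$-primary. You instead show directly that the common zero locus is $\{0\}$ by reading off $e_1(w)=e_2(w)=e_4(w)=0$ from $h_1,h_2,h_4$ and $e_3(w)=0$ from $h_{3k}\equiv e_3^k$. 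This is essentially the same strategy---both amount to proving the ideal is $\mathfrak{m}$-primary---but your zero-locus computation is more self-contained and does not rely on the somewhat informal Theorem~\ref{thm-being-regular}, whose four-variable analogue is never stated precisely in the paper. One small comment: your opening sentence says you are ``using the four-variable analogue of Theorem~\ref{thm-being-regular}'', but your actual argument bypasses that theorem entirely; you might drop that phrase for accuracy.
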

\begin{proof}
 Similar to Corollary \ref{h12nc}.
\end{proof}

\begin{pro}
Consider the sequence $h_2,h_3,h_4,h_n$, then
\[
h_n= \begin{cases}
    (-1)^ke_1^{k}e_4^k \mod (h_2,h_3,h_4),  &\text{ if  $ n=5k $;}\\
    (-1)^ke_1^{k+1}e_4^k \mod (h_2,h_3,h_4),  &\text{ if  $ n=5k+1 $;}\\
    0 \mod (h_2,h_2,h_4),  &\text{ otherwise.}
    \end{cases}
\] 
\end{pro}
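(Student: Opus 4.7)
The strategy is to follow the same template as Proposition \ref{h12n}: apply Newton's identity and carry out an induction in the quotient ring $R := S/(h_2, h_3, h_4)$, where $S = \mathbb{C}[x_1,x_2,x_3,x_4]$. The main new feature (compared with the three-variable case) is that one must first reduce $e_2, e_3, e_4$ to expressions in $e_1$ alone, and then show that the induced scalar recurrence has period~$10$.

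First I would unwind the vanishings $h_2 \equiv h_3 \equiv h_4 \equiv 0$ in $R$ to express the elementary symmetric polynomials of degrees $2,3,4$ in terms of $e_1$. Newton's formula (Proposition \ref{Newton-formulas}) gives
\[
h_2 = e_1^2 - e_2, \qquad h_3 = e_1 h_2 - e_2 h_1 + e_3, \qquad h_4 = e_1 h_3 - e_2 h_2 + e_3 h_1 - e_4,
\]
so reducing in order yields $e_2 \equiv e_1^2$, then $e_3 \equiv e_1^3$, and finally $e_4 \equiv e_1 e_3 \equiv e_1^4$ in $R$.

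Next, for $n \geq 5$, Newton's formula (together with $e_m = 0$ for $m > 4$) reads
\[
h_n = e_1 h_{n-1} - e_2 h_{n-2} + e_3 h_{n-3} - e_4 h_{n-4}.
\]
After substitution this becomes the scalar-coefficient recurrence
\[
h_n \equiv e_1 h_{n-1} - e_1^2 h_{n-2} + e_1^3 h_{n-3} - e_1^4 h_{n-4} \pmod{(h_2,h_3,h_4)}.
\]
I would then show by strong induction on $n$ that $h_n \equiv c_n\, e_1^n$ in $R$ for some integer $c_n$, where $(c_n)$ satisfies $c_n = c_{n-1} - c_{n-2} + c_{n-3} - c_{n-4}$ with initial data $c_0 = c_1 = 1$, $c_2 = c_3 = c_4 = 0$. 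The characteristic polynomial $t^4 - t^3 + t^2 - t + 1 = (t^5 + 1)/(t+1)$ has as roots the primitive $10$-th roots of unity, so $(c_n)$ is periodic of period~$10$; a direct computation of $c_0, c_1, \dots, c_9$ confirms that
\[
c_{5k} = (-1)^k, \qquad c_{5k+1} = (-1)^k, \qquad c_{5k+j} = 0 \ \text{ for } j \in \{2,3,4\}.
\]
Translating back via $e_1^{5k} = e_1^k (e_1^4)^k \equiv e_1^k e_4^k$ and $e_1^{5k+1} \equiv e_1^{k+1} e_4^k$ in $R$ recovers exactly the claimed three-case formula.

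The main obstacle is purely bookkeeping: one must be careful to derive the chain $e_2 \equiv e_1^2 \Rightarrow e_3 \equiv e_1^3 \Rightarrow e_4 \equiv e_1^4$ in the correct order, and then cleanly identify the period-$10$ behaviour of the scalar recurrence so that the induction terminates after finitely many base cases. Once these two ingredients are in hand, the strong induction step is immediate from the reduced recurrence.
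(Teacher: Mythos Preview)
Your proposal is correct and follows precisely the approach the paper has in mind: the paper's own proof reads ``Similar to Proposition \ref{h12n}'', i.e.\ apply Newton's identity $\sum_{i=0}^{n}(-1)^i e_i h_{n-i}=0$, extract $e_2\equiv e_1^2$, $e_3\equiv e_1^3$, $e_4\equiv e_1^4$ from the vanishing of $h_2,h_3,h_4$, and run the resulting recurrence. Your observation that the characteristic polynomial $t^4-t^3+t^2-t+1=(t^5+1)/(t+1)$ forces period~$10$ is a tidy way to package the induction, but it is an embellishment rather than a different strategy.
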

\begin{proof}
 Similar to Proposition \ref{h12n}.
\end{proof}

\begin{cor}
The sequence $h_2,h_3,h_4,h_n$ is a regular sequence if and only if $n=5k,5k+1, k \in \mathbb{N}$. 
\end{cor}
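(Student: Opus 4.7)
The plan is to follow the pattern of Corollary \ref{h12nc}, suitably extended to the four-variable setting. I will invoke the four-variable analogue of Theorem \ref{thm-being-regular} (alluded to in the proof of the earlier four-variable corollaries): a sequence $h_{i_1}, h_{i_2}, h_{i_3}, h_{i_4}$ in $S = \mathbb{C}[x_1, x_2, x_3, x_4]$ is regular if and only if $h_{i_4} \notin (h_{i_1}, h_{i_2}, h_{i_3})$ and every form of degree greater than $i_1 + i_2 + i_3 + i_4$ lies in $(h_{i_1}, h_{i_2}, h_{i_3}, h_{i_4})$.

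For the ``only if'' direction, when $n \equiv 2, 3, 4 \pmod 5$ the preceding Proposition gives $h_n \equiv 0 \pmod{(h_2, h_3, h_4)}$, so $h_n \in (h_2, h_3, h_4)$ and the sequence fails to be regular. For the ``if'' direction, assume $n = 5k$ or $n = 5k+1$ with $k \geq 1$. The first step is to show $h_n \notin (h_2, h_3, h_4)$; by the preceding Proposition this reduces to showing that the image of $e_1^k e_4^k$, respectively $e_1^{k+1} e_4^k$, in $S/(h_2, h_3, h_4)$ is nonzero. I would argue this by exhibiting a single point in the zero locus of $(h_2, h_3, h_4) \subset \mathbb{C}^4$ at which $e_1$ and $e_4$ are both nonzero. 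Using Newton's identities (Proposition \ref{Newton-formulas}) one checks that $h_2 = h_3 = h_4 = 0$ is equivalent to $e_2 = e_1^2,\ e_3 = e_1^3,\ e_4 = e_1^4$, so $(x_1, x_2, x_3, x_4)$ are the roots of $t^4 - ct^3 + c^2 t^2 - c^3 t + c^4$ where $c = e_1$; the substitution $t = cs$ reduces this to $c^4(s^5+1)/(s+1)$. Taking $c = 1$ and $\zeta = e^{i\pi/5}$, the point $w = (\zeta, \zeta^3, \zeta^7, \zeta^9)$ lies in the zero locus and satisfies $e_1(w) = e_4(w) = 1$; hence $e_1^a e_4^k$ does not vanish at $w$ for any $a, k \geq 0$, and consequently is nonzero in $S/(h_2, h_3, h_4)$.

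The second step is the degree condition. Given any $m > 2 + 3 + 4 + n = 9 + n$, write $m = 5j + r'$ with $r' \in \{0,1,2,3,4\}$. By the preceding Proposition $h_m \equiv 0 \pmod{(h_2, h_3, h_4)}$ unless $r' \in \{0,1\}$, in which case $h_m \equiv \pm e_1^a e_4^j$ with $a \in \{j, j+1\}$. The bound $m > 9 + n$ forces $j \geq k + 2$ and $a \geq k + 1$, so the image of $h_m$ in $S/(h_2, h_3, h_4)$ is divisible by that of $h_n$, whence $h_m \in (h_2, h_3, h_4, h_n)$. The four-variable analogue of Theorem \ref{thm-being-regular} then yields that $h_2, h_3, h_4, h_n$ is a regular sequence.

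The main obstacle is the nonvanishing of $e_1^a e_4^k$ modulo $(h_2, h_3, h_4)$: a direct inductive argument via the Newton recursion is messy, but the roots-of-unity point $w$ constructed above gives a clean evaluation-based proof. Everything else reduces to bookkeeping with the residue formulas of the preceding Proposition.
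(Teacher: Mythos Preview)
Your argument follows exactly the template the paper intends by ``Similar to Corollary~\ref{h12nc}'': rule out $n\equiv 2,3,4\pmod 5$ via the preceding Proposition, and for $n=5k$ or $5k+1$ verify that every $h_m$ with $m>9+n$ lies in $(h_2,h_3,h_4,h_n)$ by comparing the residues $\pm e_1^{a}e_4^{j}$. The explicit point $w=(\zeta,\zeta^3,\zeta^7,\zeta^9)$ with $\zeta=e^{i\pi/5}$ that you construct to certify $e_1^{a}e_4^{k}\not\equiv 0$ in $S/(h_2,h_3,h_4)$ is a clean extra detail the paper leaves implicit, but otherwise the two proofs coincide.
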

\begin{proof}
Similar to Corollary \ref{h12nc}. 
\end{proof}
\medskip

\section{Appendix}
\medskip
\subsection{Verification of the Conca, Krattenthaler and Watanabe conjecture}
\begin{itemize}
\item[(i)] For $p_1,p_2,p_n$: This follows directly from Corollary \ref{p12n-cor}.
\item[(ii)] For $p_1,p_3,p_n$: This follows directly from Corollary \ref{p13n-cor}.
\item[(iii)] For $h_1,h_2,h_n$: Let us start with the necessary part.
\begin{enumerate}
 \item [1] $2c \equiv 0 (\mod 6)$ implies that $c=3k$.
 \item [2] $\text{gcd}(2,3)=1$ (always true).
 \item [3] For all $t \in \mathbb{N}$ with $t> 2$ there exist $d \in A$ such that 
$d+2 \not\equiv 0,1 (\mod t)$: we know that $1+2$ and $2+2$ are $0$ or $1$ only 
modulo $3$ and so t=3; these means that this condition is false 
iff $c+2\cong 0, 1 (\mod 3)$. Thus, we have that $c=3k$.
\end{enumerate}
Viceversa, if $n=3k$, $[1]$ and $[3]$ are fulfilled.
\end{itemize}
Similarly we can show, $(iv)$ for $h_1,h_3,h_n\;$; $(v)$ for $h_1,h_4,h_n$, and 
$(vi)$ for $h_2,h_3,h_n$ respectively.
\medskip

\begin{ex}\label{ex-1-4-5}
This example is a case when $h_k\notin (h_i, h_j)$ and $h_i, h_j, h_k$ is 
not a regular sequence. 
\end{ex}
Consider the triple $h_1,h_4,h_5,$ By Proposition \ref{pro-1-4}, we have $h_5=-e_2e_3 \mod(h_1,h_4)$, hence $h_5
\notin (h_1,h_4)$, still $h_1,h_4,h_5$ is not a regular sequence. First we compute 
the hilbert series of $(h_1,h_4,h_5)$ and we find that
\[
 H_{S/(h_1, h_4, h_5)}(t)= \frac{1-t-t^4+t^6+t^7-t^8}{(1-t)^3}.
\]
If $h_1,h_4,h_5$ were a regular sequence, Hilbert series should have been
\[
\begin{split}
    H_{S/(h_1, h_4, h_5)}(t)=&\frac{(1-t)(1-t^4)(1-t^5)}{(1-t)^3}, \\
                           =&\frac{1-t-t^4+t^6+t^9-t^{10}}{(1-t)^3},
\end{split}
\]
which is clearly not the case.
\medskip
\section*{Acknowledgements}
We thanks to Professors Ralf Fr\"oberg, Mats Boij and Alexander Engstr\"om for 
the support and the help given during the Pragmatic 2011. We thanks Professors Aldo Conca
for his valuable suggestions, comments, and writing the CoCoA program for prime ideal test. 
We also thank Professors Anna M. Bigatti and John Abbott for their help in CoCoA for the 
necessary computations.

\end{document}